\documentclass[10pt,letterpaper]{article}
\usepackage[utf8]{inputenc}
\usepackage[english]{babel}
\usepackage{amsmath}
\usepackage{amsfonts}
\usepackage{amssymb}
\usepackage{graphicx}
\usepackage{mathrsfs}

%Alberto's commands and packages
\usepackage{color}
\newcommand{\supp}{{\rm supp}}

\newcommand{\R}{{\mathbb{R}}}
\newcommand{\E}{{\mathbb{E}}}
\newcommand{\N}{{\mathbb{N}}}
\newcommand{\eps}{\varepsilon}  
  %for paper content
  %for comments
  %for paper content
  %for comments
\def\sideremark#1{\ifvmode\leavevmode\fi\vadjust{\vbox to0pt{\vss% the remark
 \hbox to 0pt{\hskip\hsize\hskip1em%                          will appear only
 \vbox{\hsize2.1cm\tiny\raggedright\pretolerance10000%          on the side
  \noindent #1\hfill}\hss}\vbox to15pt{\vfil}\vss}}}%

\usepackage{upref,amsthm,amsxtra,exscale}
\usepackage{cite}
\usepackage[colorlinks=true,urlcolor=blue,
citecolor=red,linkcolor=blue,linktocpage,pdfpagelabels,
bookmarksnumbered,bookmarksopen]{hyperref}

\newtheorem{theorem}{Theorem}[section]
\newtheorem{corollary}[theorem]{Corollary}
\newtheorem{lemma}[theorem]{Lemma}
\newtheorem{proposition}[theorem]{Proposition}
\newtheorem{remark}[theorem]{Remark}

\numberwithin{equation}{section}

\author{Mónica Clapp\footnote{M. Clapp was partially supported by UNAM-DGAPA-PAPIIT grant IN100718 (Mexico) and CONACYT grant A1-S-10457 (Mexico).} \quad and \quad Alberto Saldaña\footnote{A. Saldaña was supported by the Alexander von Humboldt Foundation (Germany).}}
\title{Entire nodal solutions to the critical Lane-Emden system}
\date{\today}

\begin{document}

\maketitle

\begin{abstract}
We establish the existence of finitely many sign-changing solutions to the Lane-Emden system 
$$-\Delta u=|v|^{q-2}v,\quad -\Delta v=|u|^{p-2}u \quad \text{ in }\R^N, \ \ N\geq 4,
$$
where the exponents $p$ and $q$ lie on the critical hyperbola $\frac{1}{p}+\frac{1}{q}=\frac{N-2}{N}$. These solutions are nonradial and arise as limit profiles of symmetric sign-changing minimizing sequences for a critical higher-order problem in a bounded domain.

\medskip

\noindent\textbf{Keywords:} Hamiltonian system; critical hyperbola; entire nodal solutions; variational methods; concentration-compactness; symmetries.\medskip

\noindent\textbf{MSC2010:} 35J47; 35J30; 35B33; 35B08; 35B06.
\end{abstract}

\section{Introduction}
\label{sec:introduction}
Consider the Lane-Emden system
\begin{equation} \label{eq:system}
\begin{cases}
-\Delta u=|v|^{q-2}v,\\
-\Delta v=|u|^{p-2}u, \\
u\in D^{2,q'}(\mathbb{R}^{N}),\quad v\in D^{2,p'}(\mathbb{R}^{N}),
\end{cases}
\end{equation}
where $N\geq 3$ and $(p,q)$ lies on the critical hyperbola, that is,
\begin{equation} \label{eq:hyperbola}
\frac{1}{p}+\frac{1}{q}=\frac{N-2}{N}.
\end{equation} 
As usual, $p':=\frac{p}{p-1}$, $q':=\frac{q}{q-1}$, and $D^{2,r}(\mathbb{R}^{N})$ is the completion of $\mathcal{C}^\infty_c(\mathbb{R}^{N})$ with respect to the norm
$$\|w\|_r:=\left(\int_{\mathbb{R}^N}|\Delta w|^r\right)^\frac{1}{r}.$$

The reduction-by-inversion approach allows to reformulate the system \eqref{eq:system} as a higher-order quasilinear problem. Indeed, $(u,v)$ is a (strong) solution to \eqref{eq:system} if and only if $u$ is a (weak) solution of
\begin{equation} \label{eq:prob}
\begin{cases}
\Delta(|\Delta u|^{q'-2}\Delta u)=|u|^{p-2}u,\\
u\in D^{2,q'}(\mathbb{R}^{N})
\end{cases}
\end{equation}
and $v:=-|\Delta u|^{q'-2}\Delta u$; see Lemma \ref{lem:equivalence} below.  
  
Using a concentration-compactness argument, P.-L. Lions showed in \cite{l} that \eqref{eq:prob} has a positive solution when $(p,q)$ satisfies \eqref{eq:hyperbola}.  Thus, a \emph{positive} solution $(u,v)$ of  \eqref{eq:system}-\eqref{eq:hyperbola} exists.  Moreover, $u$ and $v$ are radially symmetric, and they are unique up to translations and dilations \cite{hv,CLO05}. This solution does not have, in general, an explicit formula like in the case of the scalar problem
\begin{equation} \label{eq:yamabe}
-\Delta u = |u|^{2^*-2}u,\qquad u\in D^{1,2}(\mathbb{R}^{N}),
\end{equation}
where $2^*:=\frac{2N}{N-2}$ is the critical Sobolev exponent; but the precise decay rates of $u$ and $v$ at infinity can be deduced and they depend in a subtle way on the value of the exponents $p$ and $q$; see \cite[Theorem 2]{hv}.

\medskip

In this paper, we establish the existence of \emph{sign-changing} solutions to \eqref{eq:system}-\eqref{eq:hyperbola}. Our main result is the following one. We use $\lfloor x \rfloor$ to denote the greatest integer less than or equal to $x$.

\begin{theorem} \label{thm:main}

If $(p,q)$ satisfies \eqref{eq:hyperbola}, then the system \eqref{eq:system} has at least $\lfloor \frac{N}{4}\rfloor$ nonradial sign-changing solutions, i.e., both components $u$ and $v$ change sign.
\end{theorem}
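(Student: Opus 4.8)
The plan is to realize the sought entire solutions as limit profiles of symmetric sign-changing minimizers for the equivalent higher-order problem \eqref{eq:prob}, following the scheme of \cite{l} but adapted to a suitable symmetry setting that forces nodal behavior. First I would fix an integer $1 \le m \le \lfloor N/4 \rfloor$ and choose a closed subgroup $G_m$ of $O(N)$ acting on $\R^N$ so that (a) every $G_m$-orbit is infinite (this is where $N \ge 4$ and $m \le \lfloor N/4 \rfloor$ enter, guaranteeing enough room to split the coordinates into blocks on which rotations act freely), and (b) there is a $G_m$-invariant open set $\Omega$, say a ball or an annulus-type region, together with a homomorphism $\phi_m \colon G_m \to \mathbb{Z}/2$ whose kernel has index $2$, so that $\phi_m$-equivariant functions (i.e. $u(gx) = \phi_m(g) u(x)$) are necessarily sign-changing and orthogonal to all radial functions. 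I would then consider the constrained minimization of the quadratic-type functional associated to \eqref{eq:prob} over the Nehari-type manifold inside the space of $\phi_m$-equivariant functions in $D^{2,q'}(\R^N)$, letting the domain exhaust $\R^N$, i.e. looking at $\phi_m$-equivariant least-energy levels $c_m$ on balls $B_R$ and on all of $\R^N$.

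The core of the argument is a concentration-compactness analysis of a minimizing sequence $(u_k)$ at the level $c_m$. By the symmetry, any bubble that forms must be accompanied by its whole $G_m$-orbit; since the orbit is infinite, a nontrivial concentration would cost infinite energy, contradicting boundedness of $(u_k)$. Hence after passing to a subsequence and using $P.$-L. Lions' concentration-compactness lemma in the form used in \cite{l}, vanishing and dichotomy are ruled out and $(u_k)$ is compact modulo the scaling invariance of the problem; rescaling appropriately yields a nontrivial $\phi_m$-equivariant weak solution $u_m$ of \eqref{eq:prob}, whence $(u_m, v_m)$ with $v_m := -|\Delta u_m|^{q'-2}\Delta u_m$ solves \eqref{eq:system} by Lemma \ref{lem:equivalence}. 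The equivariance forces $u_m$ to change sign, and since $v_m$ is (up to sign) $|\Delta u_m|^{q'-2}\Delta u_m$ and $\Delta u_m$ inherits the same equivariance, $v_m$ changes sign as well; the nonradiality is immediate since a $\phi_m$-equivariant function with $\phi_m$ nontrivial cannot be radial.

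To obtain $\lfloor N/4 \rfloor$ \emph{distinct} solutions I would arrange the groups $G_1, \dots, G_{\lfloor N/4\rfloor}$ so that the corresponding symmetry types are mutually exclusive — e.g. by prescribing different numbers of rotation blocks or different isotropy subgroups — so that a function cannot be simultaneously $\phi_i$- and $\phi_j$-equivariant for $i \ne j$ unless it is zero. This guarantees that the solutions $u_1, \dots, u_{\lfloor N/4 \rfloor}$ are pairwise distinct (even after translations and dilations, which preserve the count once the symmetry groups are centered consistently). A technical point to settle along the way is that the $\phi_m$-equivariant minimization is nontrivial, i.e. $0 < c_m < \infty$; the lower bound follows from the Sobolev-type inequality underlying $D^{2,q'}(\R^N)$, and the upper bound from an explicit test function built by gluing together a finite, antisymmetrically-placed configuration of truncated Talenti–type bubbles for \eqref{eq:prob} along a single orbit — using that the interaction energy between distinct bubbles is of lower order, a standard but somewhat delicate estimate given the lack of an explicit formula for the ground state of \eqref{eq:prob}.

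I expect the main obstacle to be precisely this compactness step at the critical exponent: because $p, q$ lie on the critical hyperbola, the embedding $D^{2,q'}(\R^N) \hookrightarrow L^p(\R^N)$ is not compact, and one must show that the symmetry genuinely restores compactness of minimizing sequences at the level $c_m$ — ruling out the escape of a single bubble to infinity or its collapse, which requires quantifying how the infinite orbit multiplies the energy cost. The second delicate point is verifying the strict energy inequality $c_m <$ (energy of one bubble times the relevant orbit count threshold), ensuring the minimizer is attained rather than splitting into the positive ground state plus its reflection at infinity.
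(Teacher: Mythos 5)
Your overall framework — reduction by inversion, minimization over a Nehari-type manifold of $\phi$-equivariant functions, a family of groups $G_1,\dots,G_{\lfloor N/4\rfloor}$ with mutually incompatible symmetry types — matches the paper's strategy in outline. But the compactness step, which you yourself flag as the crux, is wrong in a way that cannot be repaired along the lines you propose.

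You require that \emph{every} $G_m$-orbit be infinite, so that a concentrating bubble would be replicated along its whole orbit and cost infinite energy. This is impossible for a closed subgroup of $O(N)$: every such group fixes the origin, so $\{0\}$ is always a singleton orbit. More to the point, the fixed-point set $(\R^N)^{G_m}$ is a linear subspace of dimension at least one for the natural block-rotation groups (in the paper's construction $G_j$ fixes the whole factor $\{0\}^{4j}\times(\R^{N-4j})^{\Lambda_j}$), and a minimizing sequence in $\mathcal{N}^\phi(\R^N)$ can perfectly well concentrate at a fixed point at \emph{finite} energy cost, even while being $\phi_m$-equivariant. Your infinite-orbit argument only excludes concentration away from $(\R^N)^{G_m}$; it says nothing about concentration at fixed points, which is exactly what can and does happen. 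Working on an annulus to avoid fixed points trades one problem for another: you then get a Dirichlet solution in the annulus, and the passage to $\R^N$ reintroduces the shrinking inner orbits.

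The paper's mechanism is structurally different: it does not try to restore compactness. Instead it records property $(\mathbf{S1})$ — each orbit is either positive-dimensional or a single fixed point — precisely so that any concentration \emph{must} occur at a $G$-fixed point (Lemma \ref{lem:orbits}). Then Theorem \ref{thm:profile} is a win-win alternative: either the minimizing sequence on the unit ball converges to a minimizer, or it blows up at a fixed point and the rescaled limit $w$ is a nontrivial $\phi$-equivariant solution on a half-space or on $\R^N$ with $J(w)=c^\phi_\infty$. Crucially, Lemma \ref{lem:leastenergy} shows that $c^\phi(\Omega)=c^\phi_\infty$ for \emph{every} $G$-invariant smooth domain with a fixed point (by rescaling test functions at a fixed point), so the blown-up solution on the half-space or on $\R^N$ automatically realizes the minimum on $\mathcal{N}^\phi(\R^N)$. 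This makes your entire last paragraph — the strict energy inequality, the gluing of Talenti-type bubbles along an orbit, the lower-order interaction estimates — unnecessary; the paper never needs to compare $c_m$ with a compactness threshold, because it is indifferent to whether compactness holds. You would need to either adopt this blow-up-profile alternative, or find a genuinely fixed-point-free setting (which the linearity of $O(N)$ rules out), to make your argument go through.

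A smaller point: the claim that the weak limit of a Palais–Smale sequence solves the limit equation is not automatic when $q'\neq 2$, since the operator is quasilinear. The paper devotes an appendix (Proposition \ref{prop:A}, using mollifications rather than truncations, which would destroy second weak differentiability) to this; your sketch passes over it.
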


The solutions given by Theorem \ref{thm:main} have some explicit symmetries which provide some information on their shape; see Lemma \ref{lem:examples} and Remark \ref{rem:n3}.

Theorem \ref{thm:main} seems to be the first result regarding the existence of entire sign-changing solutions to \eqref{eq:system}-\eqref{eq:hyperbola}, except for the particular cases $p=q=2^*$ and $q=2$ (or $p=2$). 

When $p=q=2^*$, the solutions to \eqref{eq:system} are $(u,u)$, where $u$ is a solution to the Yamabe problem \eqref{eq:yamabe}, which is invariant under Möbius transformations. Taking advantage of this fact, W. Ding established the existence of infinitely many sign-changing solutions to \eqref{eq:yamabe} in  \cite{d}. They are invariant under the action of a group of conformal transformations whose orbits have positive dimension. Bubbling sign-changing solutions were obtained by del Pino, Musso, Pacard, and Pistoia in \cite{dmpp}, using the Lyapunov-Schmidt reduction method. Their solutions are different from those in \cite{d}.

When $q=2$, \eqref{eq:prob} becomes the Paneitz problem
\begin{equation} \label{eq:paneitz}
\Delta^2 u= |u|^{2_*-2}u,\qquad u\in D^{2,2}(\mathbb{R}^{N}),
\end{equation}
with $2_*:=\frac{2N}{N-4}$. Inspired by Ding's approach, Bartsch, Schneider and Weth \cite{bsw} established the existence of infinitely many solutions to \eqref{eq:paneitz} and to more general polyharmonic problems which, like \eqref{eq:paneitz}, are invariant under conformal transformations. We stress that the solutions given by Theorem \ref{thm:main} for $q=2$ are different from those in \cite{bsw}.

Unfortunately, the approach followed in \cite{d,bsw} does not apply to arbitrary $(p,q)$ on the critical hyperbola because, even though the problem  \eqref{eq:prob} is invariant under Euclidean transformations and dilations, it is not invariant under Möbius transformations in general; see Proposition \ref{prop:no_kelvin} below. On the other hand, the Lyapunov-Schmidt reduction method used in \cite{dmpp} relies on a good knowledge of the linearized problem and on the explicit form of the positive entire solution to \eqref{eq:yamabe}, but this information is not available for the Lane-Emden system \eqref{eq:system}. 

Yet another kind of sign-changing solutions to the Yamabe problem \eqref{eq:yamabe}, different from those in \cite{d,dmpp}, was recently discovered in \cite{c}. They arise as limit profiles of symmetric sign-changing minimizing sequences for the purely critical exponent problem in a bounded domain.   

To prove Theorem \ref{thm:main} we follow the strategy of \cite{c}, that is, we analyze the behavior of minimizing sequences, with a specific kind of symmetries, for the critical problem
\begin{equation} \label{eq:alternative}
\Delta(|\Delta u|^{q'-2}\Delta u)=|u|^{p-2}u,\qquad u\in D^{2,q'}_0(\Omega),
\end{equation}
when $\Omega$ is the unit ball. These symmetric functions, which we call $\phi$\emph{-equivariant}, may be chosen to be sign-changing by construction; see Section \ref{sec:profile}. Unlike the conformal symmetries considered in \cite{d,bsw} which prevent blow-up, our symmetries are given by linear isometries which have fixed points, thus allowing blow-up. We impose some conditions on the symmetries to ensure that the blow-up profile of $\phi$-equivariant minimizing sequences is $\phi$-equivariant; see assumptions $\textbf{(S1)}$ and $\textbf{(S2)}$ below.

There are two main sources of difficulties in performing the blow-up analysis: the nonlinear nature of the operator in equation \eqref{eq:prob} and the fact that it is of higher order. Like for the purely critical $p$-Laplacian problem  \cite{mw,cl}, it is delicate to show that the weak limit of a minimizing sequence for \eqref{eq:prob} solves a limit problem when $q\neq 2$. For the $p$-Laplacian this is usually achieved by using suitable truncations, but due to the higher-order nature of \eqref{eq:prob} this approach cannot be applied (a truncation may cause a jump discontinuity of the gradient, preventing the truncated function from being twice weakly differentiable).  We circumvent this difficulty using a more general approach based on mollifications. 

The concentration and blow-up behavior of $\phi$-equivariant minimizing sequences for \eqref{eq:alternative} in a bounded domain is given by Theorem \ref{thm:profile} below. This result contains an existence alternative: it asserts that there exists a $\phi$-equivariant minimizer for \eqref{eq:alternative}, either in the unit ball, or in a half-space, or in the whole space $\mathbb{R}^N$. Moreover, due to the presence of fixed points, these minimizers have the same energy; see Lemma \ref{lem:leastenergy}. Therefore, anyone of them is a $\phi$-equivariant least energy solution to the problem \eqref{eq:prob} in the whole space $\mathbb{R}^N$. We stress that, unlike for the Laplacian, a general unique continuation property is not available, as far as we know, for the problem \eqref{eq:alternative}. So one cannot discard the possibility of having solutions to \eqref{eq:prob} which vanish outside a ball or in a half-space.

Finally, we point out two limitations of our method. Firstly, it cannot be applied when $N=3$, because there are no groups in this dimension with the properties that we need; see Remark \ref{rem:n3}. Secondly, in contrast with the cases $q=2^*$ and $q=2$ considered in \cite{d,bsw}, our approach yields only finitely many solutions. The questions whether the system \eqref{eq:system} has a sign-changing solution in dimension $3$, or whether it has infinitely many solutions in every dimension, remain open.

\medskip

To close this introduction we mention some possible generalizations of Theorem \ref{thm:main}. For the sake of clarity, in this paper we have focused on the system \eqref{eq:system} and the associated higher-order problem \eqref{eq:prob}; but an inspection of the proofs shows that the same approach can be used to study the existence of finitely many entire nodal solutions to the Hardy-Littlewood-Sobolev system
\begin{equation*}
(-\Delta)^m u=|v|^{q-2}v,\quad
(-\Delta)^m v=|u|^{p-2}u, \quad 
u\in D^{2m,q'}(\mathbb{R}^{N}),\ v\in D^{2m,p'}(\mathbb{R}^{N}),
\end{equation*}
with $m\in \N$ and $\frac{1}{p}+\frac{1}{q}=\frac{N-2m}{N}$, or the associated higher-order problem
\begin{equation*}
\Delta^m(|\Delta^m u|^{q'-2}\Delta^m u)=|u|^{p-2}u,\quad 
u\in D^{2m,q'}(\mathbb{R}^{N}).
\end{equation*}
The left-hand side of the equation above can be regarded as a quasilinear version of the polyharmonic operator. Similarly, one could also consider the problem
\begin{equation*}
-\operatorname{div}(\Delta^m(|\nabla\Delta^m u|^{q'-2}\nabla\Delta^m u))=|u|^{p-2}u,\quad
u\in D^{2m+1,q'}(\mathbb{R}^{N}),
\end{equation*}
where $m\in \N\cup\{0\}$ and $\frac{1}{p}+\frac{1}{q}=\frac{N-2m-1}{N}$.  Note that this problem reduces to the critical $p$-Laplacian problem if $m=0$. The existence of finitely many sign-changing solutions, in this particular case, was shown in \cite{cl}.  The approach we present here can be used to extend Theorem \ref{thm:main} to anyone of these problems.

\medskip

The paper is organized as follows. In Section \ref{r:sec} we explain the reduction-by-inversion approach and show the equivalence between \eqref{eq:system} and \eqref{eq:prob}. In Section \ref{sec:profile} we introduce our symmetric variational framework and we give a precise description of the concentration and blow-up behavior of $\phi$-equivariant minimizing sequences for the higher-order problem \eqref{eq:alternative} in a bounded domain. Our main result, Theorem \ref{thm:main}, is proved in Section \ref{sec:existence}. Finally, in an appendix, we give conditions which guarantee that the weak limit of a minimizing sequence for the variational functional is a critical point of this functional.

\section{Reduction by inversion}\label{r:sec}

From now on, we assume that $(p,q)$ lies on the critical hyperbola \eqref{eq:hyperbola}. Then $p,q>\frac{N}{N-2}$ and $p=\frac{Nq'}{N-2q'}$, where $q':=\frac{q}{q-1}$. 

\begin{figure}[h!]
  \begin{center}
    \begin{picture}(150,160)
    \put(0,5){
    \includegraphics[height= 5cm]{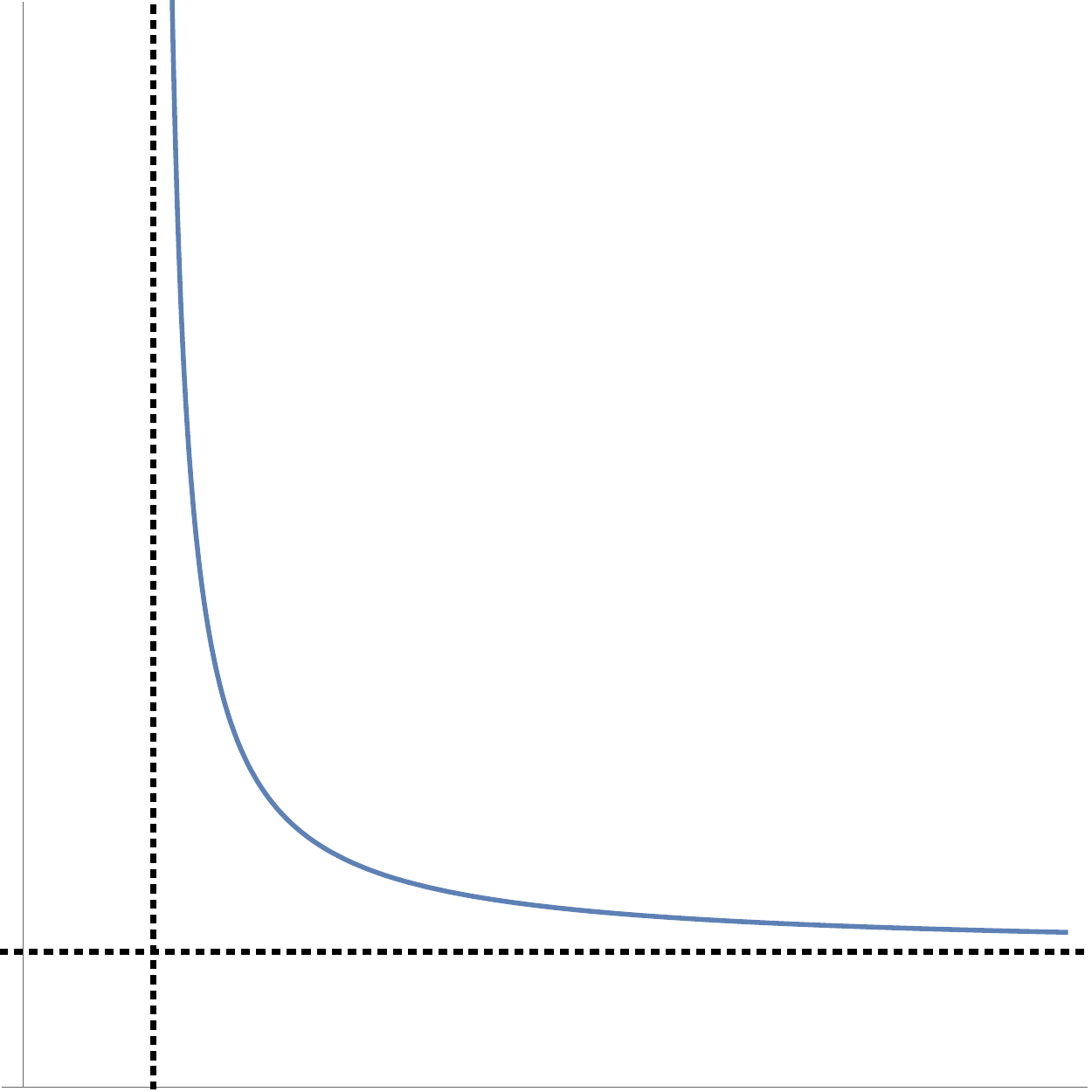}
    }
    \put(150,5){$p$}
    \put(0,150){$q$}
    %\put(-5,80){$q^*$}
%    \put(5,80){$-$}
    \put(-18,25){$\frac{N}{N-2}$}
    \put(15,-8){$\frac{N}{N-2}$}
    %\put(20,80){$\bullet$}
    %\put(30,80){$\bullet$}
    %\put(40,80){$\bullet$}
    %\put(49,80){$\bullet$}
    %\put(33,70){$(p_\eps,q^*)$}
    %\put(50,90){$(p^*,q^*)$}
  \end{picture}
  \end{center}
\caption{The critical hyperbola and its asymptotes.}\label{hyp:fig}
\end{figure}

We consider the Banach space
\begin{align*}
D^{2,q'}(\mathbb{R}^N):=\{u\in L^p(\mathbb{R}^N):u\text{ is twice weakly differentiable},\;\Delta u\in L^{q'}(\mathbb{R}^N)\},
\end{align*}
endowed with the norm 
$$\|w\|_{q'}:=\left(\int_{\mathbb{R}^N}|\Delta w|^{q'}\right)^{\frac{1}{q'}}.$$
This space is the completion of $\mathcal{C}^\infty_c(\mathbb{R}^N)$ with respect to $\|\cdot\|_{q'}$, and $p$ is the critical exponent for the Sobolev embedding $D^{2,q'}(\mathbb{R}^N)\hookrightarrow L^p(\mathbb{R}^N)$.

A solution to the system \eqref{eq:system} is a critical point $(u,v)$ of the functional
$$I(u,v):=\int_{\mathbb{R}^N}\nabla u\cdot\nabla v -\frac{1}{p}\int_{\mathbb{R}^N}|u|^{p}-\frac{1}{q}\int_{\mathbb{R}^N}|v|^{q},$$
defined in the space $D^{1,(q')^*}(\mathbb{R}^N)\times D^{1,(p')^*}(\mathbb{R}^N)$, where $r^*:=\frac{Nr}{N-r}$. The critical points belong to $D^{2,q'}(\mathbb{R}^N)\times D^{2,p'}(\mathbb{R}^N)$ and are, therefore, strong solutions. 

By a solution $u$ to the problem \eqref{eq:prob} we mean a weak solution, i.e., a critical point of the functional
\begin{equation} \label{eq:j}
J(u):=\frac{1}{q'}\int_{\mathbb{R}^N}|\Delta u|^{q'}-\frac{1}{p}\int_{\mathbb{R}^N}|u|^{p},
\end{equation}
defined in the Banach space $D^{2,q'}(\mathbb{R}^N)$. Its derivative is given by
\begin{align*}
J'(u)\varphi=\int_{\mathbb{R}^N} |\Delta u|^{q'-2}\Delta u\Delta \varphi - \int_{\mathbb{R}^N}|u|^{p-2}u\varphi.
\end{align*} 

The following lemma establishes the equivalence between solutions to the higher-order quasilinear problem \eqref{eq:prob} and to the system \eqref{eq:system}. This relation is sometimes called \emph{reduction-by-inversion}. We refer to the surveys \cite{bdt,df,r} for an overview of the diversity of methods used in the study of \eqref{eq:system} and more general Hamiltonian systems. 

\begin{lemma} \label{lem:equivalence}
$u$ is a solution of \eqref{eq:prob} and $v=-|\Delta u|^{q'-2}\Delta u$ iff $(u,v)$ is a solution of \eqref{eq:system}.
\end{lemma}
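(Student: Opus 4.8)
The plan is to verify the equivalence in both directions by directly relating the Euler–Lagrange equations of $J$ and $I$, using the algebraic identities that the critical hyperbola condition produces. First I would record the pointwise relation: if $v=-|\Delta u|^{q'-2}\Delta u$, then $|v|^{q-2}v=-|\Delta u|^{(q'-1)(q-1)-1}\cdot\Delta u\cdot|\Delta u|^{\,?}$, and the point is that $(q'-1)(q-1)=1$ since $q'=q/(q-1)$, so in fact $|v|^{q-2}v=-\Delta u$, i.e. $-\Delta u=|v|^{q-2}v$ holds automatically. This is the first equation of \eqref{eq:system}. Dually, inverting the relation $v=-|\Delta u|^{q'-2}\Delta u$ gives $\Delta u=-|v|^{q-2}v$, so $\Delta u$ is expressed back in terms of $v$; I would make sure the function spaces match up, i.e. $u\in D^{2,q'}$ forces $\Delta u\in L^{q'}$ hence $v\in L^{(q')'}=L^q$ and, via the equation, $v\in D^{2,p'}$ with $\Delta v\in L^{p'}$, using the Calderón–Zygmund/Sobolev mapping properties together with $1/p+1/q=(N-2)/N$.

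For the forward direction, assume $u$ is a weak solution of \eqref{eq:prob}, so $\int |\Delta u|^{q'-2}\Delta u\,\Delta\varphi=\int |u|^{p-2}u\,\varphi$ for all test $\varphi$. Setting $v:=-|\Delta u|^{q'-2}\Delta u$, the left side becomes $-\int v\,\Delta\varphi=-\int \Delta v\,\varphi$ (integrating by parts, justified once $v\in D^{2,p'}$), so $-\Delta v=|u|^{p-2}u$ weakly, which is the second equation of \eqref{eq:system}; combined with the pointwise identity $-\Delta u=|v|^{q-2}v$ established above, $(u,v)$ solves \eqref{eq:system}, and it is a strong solution by the regularity theory for the (decoupled, after inversion) second-order equations as mentioned in the text. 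For the reverse direction, if $(u,v)$ solves \eqref{eq:system}, then from $-\Delta u=|v|^{q-2}v$ I invert to get $v=-|{-\Delta u}|^{q'-2}({-\Delta u})\cdot(\text{sign bookkeeping})=-|\Delta u|^{q'-2}\Delta u$ using $(q-1)(q'-1)=1$ again; substituting this into $-\Delta v=|u|^{p-2}u$ yields $\Delta(|\Delta u|^{q'-2}\Delta u)=|u|^{p-2}u$, i.e. \eqref{eq:prob}, and testing against $\varphi\in\mathcal C_c^\infty$ and integrating by parts gives that $u$ is a weak solution in $D^{2,q'}$.

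The main obstacle I expect is not the formal computation but the functional-analytic bookkeeping: one must check that the regularity classes are genuinely equivalent, i.e. that a critical point of $I$ on $D^{1,(q')^*}\times D^{1,(p')^*}$ has the extra regularity placing it in $D^{2,q'}\times D^{2,p'}$ (so that ``$\Delta u$'' and the inversion make sense pointwise a.e. and the integrations by parts are legitimate), and conversely that $v=-|\Delta u|^{q'-2}\Delta u$ built from $u\in D^{2,q'}$ indeed lands in $D^{2,p'}$. This is where the critical hyperbola relation \eqref{eq:hyperbola} is essential: it is exactly the condition making the chain $L^{q'}\xrightarrow{\ (-\Delta)^{-1}\ }\cdots\to L^p$ and its dual consistent, so that the map $u\mapsto v$ is a bijection between the two solution sets. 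I would isolate this in a short paragraph citing elliptic $L^r$-estimates and the Sobolev embeddings $D^{2,q'}\hookrightarrow L^p$, $D^{2,p'}\hookrightarrow L^q$, and otherwise the argument is a direct manipulation of the two equations.
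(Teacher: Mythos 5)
Your proposal is correct and follows essentially the same route as the paper: establish the pointwise identity $|v|^{q-2}v=-\Delta u$ via $(q-1)(q'-1)=1$, observe that $v$ coincides with the unique $D^{1,p'}$-solution of $-\Delta w=|u|^{p-2}u$ and hence lies in $D^{2,p'}$, and then convert the weak formulation of \eqref{eq:prob} to that of \eqref{eq:system} by integration by parts. The paper phrases the final step as showing $\partial_u I(u,v)\varphi=\partial_v I(u,v)\varphi=0$, which is just a reorganization of the same computation you carry out directly on the two equations.
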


\begin{proof}
Let $u\in D^{2,q'}(\mathbb{R}^N)$ be a solution of \eqref{eq:prob} and set $v:=-|\Delta u|^{q'-2}\Delta u$. It is easy to see that $v$ coincides  a.e. in $\mathbb{R}^N$ with the (unique) solution to the problem $-\Delta w=|u|^{p-2}u$, $w\in D^{1,p'}(\mathbb{R}^N)$, which belongs to $D^{2,p'}(\mathbb{R}^N)$. Therefore, $v\in D^{2,p'}(\mathbb{R}^N)$. Note that $|v|^{q-2}v=-\Delta u$. Hence, for every $\varphi\in C^\infty_c(\mathbb{R}^N)$, we have that
\begin{align*}
\partial_uI(u,v)\varphi&= \int_{\mathbb{R}^N}\nabla v\cdot\nabla\varphi - \int_{\mathbb{R}^N}|u|^{p-2}u\varphi = \int_{\mathbb{R}^N}-v\,(\Delta\varphi) - \int_{\mathbb{R}^N}|u|^{p-2}u\varphi \\
&=J'(u)\varphi=0,\\
\partial_vI(u,v)\varphi&= \int_{\mathbb{R}^N}\nabla u\cdot\nabla\varphi - \int_{\mathbb{R}^N}|v|^{q-2}v\varphi = \int_{\mathbb{R}^N}-(\Delta u)\,\varphi - \int_{\mathbb{R}^N}|v|^{q-2}v\varphi=0,
\end{align*}
i.e., $(u,v)$ solves \eqref{eq:system}. The converse is proved in a similar way. 
\end{proof}

\section{Sign-changing minimizing sequences}
\label{sec:profile}

To produce sign-changing solutions we introduce suitable symmetries, as in \cite{c,cl}. 

Let $G$ be a closed subgroup of the group $O(N)$ of linear isometries of $\mathbb{R}^{N}$ and let $\phi:G\rightarrow\mathbb{Z}_{2}:=\{1,-1\}$ be a continuous homomorphism of groups. We write $Gx:=\{gx:g\in G\}$ for the $G$-orbit of a point $x\in\mathbb{R}^{N}$. From now on, we assume that $G$ and $\phi$ have the following properties: 
\begin{itemize}
\item[$(\mathbf{S1})$]For each $x \in \mathbb{R}^N$, either $\dim(Gx)>0$ or $Gx=\{x\}$.
\item[$(\mathbf{S2})$]There exists $\xi\in\mathbb{R}^{N}$ such that $\{g\in G:g\xi=\xi\}\subset\ker\phi$.
\end{itemize} 

Now, let $\Omega$ be a $G$-invariant domain in $\mathbb{R}^N$, i.e., $Gx\subset\Omega$ for every $x\in\Omega$. A function $u:\Omega\to\mathbb{R}$ is called $\phi$\emph{-equivariant} if
\begin{equation*}
u(gx)=\phi(g)u(x)\qquad \text{for all}\,\, g\in G,\,x\in\Omega.
\end{equation*}
If $\phi\equiv 1$, then a $\phi$-equivariant function is simply a $G$-invariant function. On the other hand, if $\phi:G\to\mathbb{Z}_{2}$ is surjective, then every nontrivial $\phi$-equivariant function is nonradial and changes sign.

We denote the closure of $\mathcal{C}_c^\infty(\Omega)$ in $D^{2,q'}(\mathbb{R}^N)$ by $D_0^{2,q'}(\Omega)$, and set
\begin{equation*}
D_0^{2,q'}(\Omega)^\phi:=\{u\in D_0^{2,q'}(\Omega):u\text{ is }\phi\text{-equivariant} \}.
\end{equation*}
We say that $u$ is a solution of
\begin{equation} \label{eq:prob_omega}
\begin{cases}
\Delta(|\Delta|^{q'-2}\Delta u)=|u|^{p-2}u,\\
u\in D^{2,q'}_0(\Omega),
\end{cases}
\end{equation}
if $u$ is a critical point of the $\mathcal{C}^1$-functional $J:D_0^{2,q'}(\Omega)\to \mathbb{R}$ given by \eqref{eq:j}. 

Let $\mathcal{C}_c^{\infty}(\Omega)^{\phi}:=\{u\in \mathcal{C}_c^{\infty}(\Omega):u\text{ is }\phi\text{-equivariant}\}$. Given $\varphi\in\mathcal{C}_{c}^{\infty}(\Omega)$, we define
\begin{align}\label{phi:notation}
\varphi_\phi(x):=\frac{1}{\mu(G)}\int_{G}\phi(g)\varphi(gx)\,\mathrm{d}\mu(g),  
\end{align}
where $\mu$ is the Haar measure on $G$. Then $\varphi_\phi\in\mathcal{C}_c^{\infty}(\Omega)^{\phi}$.

\begin{lemma} \label{lem:symmetric_criticality}
If $u\in D^{2,q'}_0(\Omega)^{\phi}$, then 
$$J'(u)[\varphi_\phi]=J'(u)\varphi\qquad \text{for every }\varphi\in\mathcal{C}_{c}^{\infty}(\Omega).$$ Consequently, if $J'(u)\vartheta = 0$ for every $\vartheta \in \mathcal{C}_c^{\infty}(\Omega)^{\phi}$, then $J'(u)\varphi = 0$ for every $\varphi \in \mathcal{C}_c^{\infty}(\Omega)$, i.e., $u$ is a solution to the problem \eqref{eq:prob_omega}.
\end{lemma}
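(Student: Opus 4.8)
The plan is to prove this as an instance of the principle of symmetric criticality, adapted to the Banach-space (quasilinear, higher-order) setting. First I would establish the identity $J'(u)[\varphi_\phi] = J'(u)\varphi$ for $u \in D^{2,q'}_0(\Omega)^\phi$ and $\varphi \in \mathcal{C}^\infty_c(\Omega)$. The key computational input is that for each fixed $g \in G$, the substitution $x \mapsto gx$ is a measure-preserving linear isometry of $\mathbb{R}^N$ (so $|\det g| = 1$), under which $\Delta(\varphi \circ g) = (\Delta\varphi)\circ g$; combined with the $\phi$-equivariance of $u$, which gives $|\Delta u(gx)|^{q'-2}\Delta u(gx) = \phi(g)|\Delta u(x)|^{q'-2}\Delta u(x)$ and $|u(gx)|^{p-2}u(gx) = \phi(g)|u(x)|^{p-2}u(x)$, one checks that the map $\varphi \mapsto J'(u)\varphi$ satisfies $J'(u)[\varphi\circ g] = \phi(g)\,J'(u)\varphi$. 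Then averaging over $G$ against the normalized Haar measure, using $\phi(g)^2 = 1$ and the definition \eqref{phi:notation}, yields
\begin{align*}
J'(u)[\varphi_\phi] = \frac{1}{\mu(G)}\int_G \phi(g)\, J'(u)[\varphi(g\,\cdot)]\,\mathrm{d}\mu(g) = \frac{1}{\mu(G)}\int_G \phi(g)^2\, J'(u)\varphi \,\mathrm{d}\mu(g) = J'(u)\varphi.
\end{align*}
Here one should be slightly careful to justify interchanging the (continuous linear) functional $J'(u)$ with the Haar integral, e.g. by a Riemann-sum / dominated convergence argument using that $g \mapsto \varphi(g\,\cdot)$ is continuous into $\mathcal{C}^\infty_c(\Omega)$ and hence into $D^{2,q'}_0(\Omega)$, with uniformly bounded support since $G$ is compact (by $(\mathbf{S1})$ and closedness $G$ is compact, being a closed subgroup of $O(N)$; in any case orbits are bounded and $G \subset O(N)$ is compact).

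For the second assertion, suppose $J'(u)\vartheta = 0$ for all $\vartheta \in \mathcal{C}^\infty_c(\Omega)^\phi$. Given an arbitrary $\varphi \in \mathcal{C}^\infty_c(\Omega)$, the function $\varphi_\phi$ lies in $\mathcal{C}^\infty_c(\Omega)^\phi$ (this is noted just before the lemma), so $J'(u)[\varphi_\phi] = 0$; by the identity just proved, $J'(u)\varphi = 0$. Since $\mathcal{C}^\infty_c(\Omega)$ is dense in $D^{2,q'}_0(\Omega)$ and $J'(u)$ is continuous, $J'(u) = 0$ on all of $D^{2,q'}_0(\Omega)$, i.e., $u$ is a solution of \eqref{eq:prob_omega}.

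The main obstacle I expect is not conceptual but technical: making rigorous the commutation of the nonlinear-in-$u$ but linear-in-$\varphi$ functional $J'(u)$ with the Haar integral defining $\varphi_\phi$, and verifying the change-of-variables identities at the level of weakly differentiable functions rather than smooth ones (the test function $\varphi$ is smooth, but $u$ is only in $D^{2,q'}_0(\Omega)$, so one uses that $\Delta$ commutes with isometries in the distributional sense and that the Nemytskii maps $t \mapsto |t|^{q'-2}t$, $t \mapsto |t|^{p-2}t$ intertwine correctly with the scalar factor $\phi(g) = \pm 1$). Everything else is bookkeeping. One subtlety worth a remark: it is crucial here — in contrast to the abstract Palais symmetric criticality theorem — that we only need to test against smooth compactly supported equivariant functions and then invoke density, which sidesteps any need for a direct decomposition of $D^{2,q'}_0(\Omega)$ into its $\phi$-equivariant part and a complement.
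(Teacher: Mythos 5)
Your proof is correct and follows essentially the same route as the paper's: the per-$g$ identity $J'(u)[\varphi\circ g]=\phi(g)J'(u)\varphi$ is exactly the change-of-variables step, and interchanging $J'(u)$ with the Haar integral is the Fubini step, which the paper applies in a single pass rather than isolating the per-$g$ identity first. The only inessential quibble is that compactness of $G$ comes simply from being a closed subgroup of the compact group $O(N)$ and does not require $(\mathbf{S1})$.
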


\begin{proof}
Note that $\Delta(v\circ g)=(\Delta v)\circ g$ for every $v\in D^{2,q'}_0(\Omega)$, $g\in G$. So, as $u$ is $\phi$-equivariant, we have that $\Delta u$ is $\phi$-equivariant. Also,
$$\Delta(\varphi_\phi)(x)=\frac{1}{\mu(G)}\int_{G}\Delta(\phi(g)\varphi\circ g)(x)\,\mathrm{d}\mu(g)=\frac{1}{\mu(G)}\int_{G}\phi(g)\,\Delta\varphi(gx)\,\mathrm{d}\mu(g).$$
Fubini's theorem and a suitable change of variables yield 
\begin{equation*}
\int_{\Omega}|\Delta u|^{q'-2}\Delta u\,\Delta(\varphi_\phi) = \int_{\Omega}|\Delta u|^{q'-2}\Delta u\, \Delta\varphi,\quad\int_{\Omega}|u|^{p-2}u(\varphi_\phi) = \int_{\Omega} |u|^{p-2}u\varphi.
\end{equation*}
Hence, $J'(u)[\varphi_\phi]=J'(u)\varphi$, as claimed.
\end{proof}

The nontrivial $\phi$-equivariant solutions to \eqref{eq:prob_omega} belong to the set
$$\mathcal{N}^\phi(\Omega):=\{u\in D_0^{2,q'}(\Omega)^\phi:u\neq0,\,\|u\|_{q'}^{q'}=|u|_{p}^{p}\},$$
where $|\cdot|_{p}$ denotes the norm in $L^p(\mathbb{R}^N)$. Define
$$c^\phi(\Omega):=\inf_{u\in \mathcal{N}^\phi(\Omega)}J(u).$$
Property $(\mathbf{S2})$ guarantees that the space $D_0^{2,q'}(\Omega)^\phi$ is infinite dimensional; see \cite{bcm}. Therefore, $\mathcal{N}^\phi(\Omega)\neq\emptyset$ and $c^\phi(\Omega)\in\mathbb{R}$. Note that $(\mathbf{S2})$ is trivially satisfied if $\phi\equiv 1$.

Next we describe the limit profile of minimizing sequences for the functional $J$ on $\mathcal{N}^\phi(\Omega)$. As we shall see, property $(\mathbf{S1})$ guarantees that the limit profile is $\phi$-equivariant; see Theorem \ref{thm:profile}. So it will be sign-changing if $\phi$ is surjective.

We start by listing some properties of $\mathcal{N}^\phi(\Omega)$.

\begin{lemma} \label{lem:nehari}
\begin{enumerate}
	\item[$(a)$]There exists $a_0 >0$ such that $\|u\|_{q'}\geq a_0$ for every $u\in \mathcal{N}^\phi(\Omega)$.
	\item[$(b)$]$\mathcal{N}^\phi(\Omega)$ is a $\mathcal{C}^1$-Banach submanifold of $D_0^{2,q'}(\Omega)^\phi$, and a natural constraint for $J$. 
	\item[$(c)$]Let $\mathscr{T}:=\left\{\sigma \in \mathcal{C}^0\left([0,1],D_0^{2,q'}(\Omega)^\phi\right) :\sigma(0)=0,\,\sigma(1)\neq 0, \,J(\sigma(1)) \leq 0\right\}.$ Then,
	$$c^\phi(\Omega)=\inf_{\sigma \in \mathscr{T}} \max_{t \in [0,1]}J(\sigma(t)).$$
\end{enumerate}
\end{lemma}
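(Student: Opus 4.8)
The plan is to treat the three assertions in turn, using throughout that $1<q'<p$ (from $p=\tfrac{Nq'}{N-2q'}$ with $q'<N/2$) and that $u\in\mathcal N^\phi(\Omega)$ means exactly $u\neq0$ and $\Psi(u):=\|u\|_{q'}^{q'}-|u|_p^p=J'(u)u=0$. For $(a)$ I would invoke the continuous Sobolev embedding $D^{2,q'}(\R^N)\hookrightarrow L^p(\R^N)$: there is $S>0$ with $S\,|w|_p^{q'}\le\|w\|_{q'}^{q'}$ for all $w\in D_0^{2,q'}(\Omega)^\phi$. For $u\in\mathcal N^\phi(\Omega)$ the identity $\|u\|_{q'}^{q'}=|u|_p^p$ turns this into $S\,|u|_p^{q'}\le|u|_p^p$, and since $u\neq0$ and $p>q'$ this forces $|u|_p\ge S^{1/(p-q')}$, hence $\|u\|_{q'}^{q'}=|u|_p^p\ge S^{p/(p-q')}$; so $a_0:=S^{p/(q'(p-q'))}$ works. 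This step is routine, and it also shows $J(u)=(\tfrac1{q'}-\tfrac1p)\|u\|_{q'}^{q'}\ge(\tfrac1{q'}-\tfrac1p)a_0^{q'}>0$ on $\mathcal N^\phi(\Omega)$, so $c^\phi(\Omega)>0$.

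For $(b)$, the functional $\Psi(u)=\|u\|_{q'}^{q'}-|u|_p^p$ is of class $\mathcal C^1$ on $D_0^{2,q'}(\Omega)^\phi$ (both terms are, since $q'>1$ and $p>1$) and $\mathcal N^\phi(\Omega)=\Psi^{-1}(0)\setminus\{0\}$. The key computation is $\Psi'(u)u=q'\|u\|_{q'}^{q'}-p\,|u|_p^p=(q'-p)\|u\|_{q'}^{q'}$, which by $(a)$ is $\le(q'-p)a_0^{q'}<0$ on $\mathcal N^\phi(\Omega)$; in particular $\Psi'(u)\neq0$ there, so $0$ is a regular value of $\Psi$ and, since $(a)$ keeps $\mathcal N^\phi(\Omega)$ away from the origin, the implicit function theorem makes $\mathcal N^\phi(\Omega)$ a $\mathcal C^1$-Banach submanifold of codimension one. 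To see it is a natural constraint, suppose $u$ is a critical point of $J|_{\mathcal N^\phi(\Omega)}$; the Lagrange multiplier rule gives $J'(u)=\lambda\Psi'(u)$ in $(D_0^{2,q'}(\Omega)^\phi)^*$ for some $\lambda\in\R$, and evaluating at $u$ (using $J'(u)u=0$ and $\Psi'(u)u\neq0$) forces $\lambda=0$, so $J'(u)$ vanishes on $D_0^{2,q'}(\Omega)^\phi\supset\mathcal C_c^\infty(\Omega)^\phi$ and Lemma~\ref{lem:symmetric_criticality} yields that $u$ solves \eqref{eq:prob_omega}.

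For $(c)$ I would use the behaviour of $J$ along rays: for fixed $u\neq0$, $t\mapsto J(tu)=\tfrac{t^{q'}}{q'}\|u\|_{q'}^{q'}-\tfrac{t^p}{p}|u|_p^p$ is positive for small $t>0$, attains its strict global maximum at the unique $t_u=(\|u\|_{q'}^{q'}/|u|_p^p)^{1/(p-q')}>0$ (at which $t_u u\in\mathcal N^\phi(\Omega)$), and tends to $-\infty$; thus $u\in\mathcal N^\phi(\Omega)$ iff $t_u=1$. For one inequality: given $u\in\mathcal N^\phi(\Omega)$, pick $T>1$ with $J(Tu)<0$ and set $\sigma(t):=Ttu$; then $\sigma\in\mathscr T$ and $\max_{t\in[0,1]}J(\sigma(t))=\max_{s\in[0,T]}J(su)=J(u)$, whence $\inf_{\sigma\in\mathscr T}\max_t J(\sigma(t))\le J(u)$, and taking the infimum over $u$ gives $\le c^\phi(\Omega)$. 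For the reverse inequality I would prove that every $\sigma\in\mathscr T$ meets $\mathcal N^\phi(\Omega)$: set $\tau:=\max\{t\in[0,1]:\sigma(t)=0\}\in[0,1)$; for $t$ slightly above $\tau$ one has $0<\|\sigma(t)\|_{q'}<a_0$ and hence $\Psi(\sigma(t))>0$ (same estimate as in $(a)$), whereas $J(\sigma(1))\le0$ with $\sigma(1)\neq0$ forces $|\sigma(1)|_p^p\ge\tfrac{p}{q'}\|\sigma(1)\|_{q'}^{q'}>\|\sigma(1)\|_{q'}^{q'}$, i.e.\ $\Psi(\sigma(1))<0$. Taking $t^*:=\sup\{t\in(\tau,1):\sigma(t)\neq0,\ \Psi(\sigma(t))>0\}$, continuity gives $\tau<t^*<1$, $\sigma(t^*)\neq0$ and $\Psi(\sigma(t^*))=0$, i.e.\ $\sigma(t^*)\in\mathcal N^\phi(\Omega)$; hence $\max_{t\in[0,1]}J(\sigma(t))\ge J(\sigma(t^*))\ge c^\phi(\Omega)$, and taking the infimum over $\sigma$ completes $(c)$.

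The delicate point is the last one: showing that an \emph{arbitrary} continuous path in $\mathscr T$ crosses $\mathcal N^\phi(\Omega)$. Unlike a radial path $t\mapsto\rho(t)w$, a general $\sigma$ may return to the origin, where $\Psi$ vanishes for trivial reasons, so one cannot simply apply the intermediate value theorem to $\Psi\circ\sigma$; the remedy is to isolate the last visit of $\sigma$ to $0$ and then combine the sign of $\Psi$ near $0$ (positive, by $(a)$) with its sign at the endpoint (negative, from $J\le0$) through the supremum argument above. All other ingredients — part $(a)$, the submanifold structure, and the Lagrange computation — are standard Nehari-manifold bookkeeping once the identity $\Psi'(u)u=(q'-p)\|u\|_{q'}^{q'}$ is recorded.
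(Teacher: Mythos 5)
Your argument is correct, and since the paper omits the proof entirely (citing \cite[Lemma 2.1]{cl}), what you have written is exactly the standard Nehari-manifold bookkeeping that citation stands for; there is nothing to contrast against. Parts $(a)$ and $(b)$ are routine given the Sobolev embedding and the identity $\Psi'(u)u=(q'-p)\|u\|_{q'}^{q'}$. In $(c)$ you correctly identified, and correctly handled, the one genuine subtlety: a general path in $\mathscr T$ may revisit $0$, where $\Psi$ vanishes trivially, so a naive intermediate-value argument on $\Psi\circ\sigma$ over $[0,1]$ does not immediately produce a point of $\mathcal N^\phi(\Omega)$. Restricting to $(\tau,1]$ with $\tau$ the last zero of $\sigma$, noting $\Psi(\sigma(t))>0$ just to the right of $\tau$ (the same Sobolev estimate as in $(a)$ shows $\Psi(w)>0$ whenever $0<\|w\|_{q'}<a_0$) and $\Psi(\sigma(1))<0$, and then invoking continuity, is precisely the fix. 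One cosmetic remark: in your definition of $t^*$ the condition $\sigma(t)\neq0$ is redundant, since $\sigma$ is nonzero throughout $(\tau,1]$ by the choice of $\tau$; after that, a plain intermediate-value argument suffices and the supremum construction is slightly more elaborate than needed, though certainly correct.
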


\begin{proof}
The proof is similar to that of \cite[Lemma 2.1]{cl} and we omit it.
\end{proof}

As usual, let $$\Omega^G:=\{x\in \Omega:Gx=\{x\}\}$$ denote the set of $G$-fixed points in $\Omega$. The proof of the next lemma is similar to that of \cite[Lemma 2.3]{cl}. We include it here for the sake of completeness.

\begin{lemma}
\label{lem:leastenergy}
If $\Omega$ is a $G$-invariant smooth domain in $\mathbb{R}^N$ and $\Omega^G \neq \emptyset$, then
$$c^\phi(\Omega)=c^\phi(\mathbb{R}^N)=:c_{\infty}^\phi.$$
\end{lemma}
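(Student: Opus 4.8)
The plan is to prove the two equalities $c^\phi(\Omega) = c^\phi(\R^N)$ separately, using scaling and concentration. First observe that by dilation invariance of the functional $J$ (if $u \in D^{2,q'}_0(\Omega)$ and $u_\lambda(x) := \lambda^{(N-2q')/q'}u(\lambda x)$ for $\lambda>0$, then $\|u_\lambda\|_{q'} = \|u\|_{q'}$, $|u_\lambda|_p = |u|_p$, and $J(u_\lambda) = J(u)$; note $u_\lambda \in D^{2,q'}_0(\lambda^{-1}\Omega)$), one reduces to the case where $\Omega$ is bounded, say $\Omega \subset B_R(0)$. The inequality $c^\phi(\R^N) \le c^\phi(\Omega)$ is immediate since $D^{2,q'}_0(\Omega)^\phi \subset D^{2,q'}_0(\R^N)^\phi$ and $\mathcal N^\phi(\Omega) \subset \mathcal N^\phi(\R^N)$, so the infimum over the smaller set is at least as large.

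For the reverse inequality $c^\phi(\Omega) \le c^\phi(\R^N)$, the key idea is to exploit the hypothesis $\Omega^G \neq \emptyset$: pick a $G$-fixed point $\xi \in \Omega^G$. Given any $\eps > 0$, choose $u \in \mathcal N^\phi(\R^N)$ with $J(u) < c^\phi(\R^N) + \eps$. Now rescale and translate $u$ to concentrate at $\xi$: for $\lambda$ large, $u_{\lambda,\xi}(x) := \lambda^{(N-2q')/q'} u(\lambda(x-\xi))$ is still $\phi$-equivariant — here one uses that $\xi$ is $G$-fixed, so $g(x-\xi) = gx - \xi$ has the right covariance, i.e. $u_{\lambda,\xi}(gx) = \lambda^{(N-2q')/q'}u(\lambda(gx-\xi)) = \lambda^{(N-2q')/q'}u(g\lambda(x-\xi)) = \phi(g)u_{\lambda,\xi}(x)$. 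Moreover $u_{\lambda,\xi}$ is essentially supported near $\xi$, hence (after multiplying by a $G$-invariant cutoff $\chi$ supported in $\Omega$ and equal to $1$ near $\xi$) it lies in $D^{2,q'}_0(\Omega)^\phi$; the truncation error in the $D^{2,q'}$ and $L^p$ norms tends to $0$ as $\lambda \to \infty$. Projecting the truncated function onto $\mathcal N^\phi(\Omega)$ via the unique positive scaling multiple $t_\lambda$ (Lemma \ref{lem:nehari} guarantees this projection exists and is continuous), one gets $t_\lambda \to 1$ and $J(t_\lambda \chi u_{\lambda,\xi}) \to J(u) < c^\phi(\R^N) + \eps$. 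Hence $c^\phi(\Omega) \le c^\phi(\R^N) + \eps$ for every $\eps$, giving the claim.

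The main technical obstacle will be the truncation/cutoff estimate: showing that $\|\chi u_{\lambda,\xi} - u_{\lambda,\xi}\|_{q'}^{q'} \to 0$ and $|\chi u_{\lambda,\xi} - u_{\lambda,\xi}|_p^p \to 0$ as $\lambda \to \infty$. This is where the higher-order nature of the problem bites: $\Delta(\chi w)$ involves $\chi \Delta w$, $\nabla\chi \cdot \nabla w$, and $w \Delta\chi$, so one must control not only $w$ in $L^p$ but also $\nabla w$ on the annular region where $\nabla\chi \neq 0$. One handles this by noting that $w = u_{\lambda,\xi}$ has, for large $\lambda$, arbitrarily small mass (in the relevant Sobolev norms) outside any fixed neighborhood of $\xi$ — this follows from the fact that $u \in D^{2,q'}(\R^N)$ implies $u \in D^{1,(q')^*}(\R^N)$ by Sobolev embedding, so $\nabla u \in L^{(q')^*}$, and the rescaling spreads the $L^{(q')^*}$ mass of $\nabla u$ on $\{|x-\xi| > \delta\}$ to be the $L^{(q')^*}$ mass of $\nabla u$ on $\{|y| > \lambda\delta\}$, which vanishes as $\lambda\to\infty$ by absolute continuity of the integral. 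Since these are exactly the norms appearing in the estimate for $\Delta(\chi w)$ (after Hölder, matching the scaling), the error terms vanish. With this in hand, the continuity of the Nehari projection and the value of $J$ on the projected function follow from routine computations.
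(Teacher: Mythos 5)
Your proof is correct in its conclusion, but it takes a noticeably more laborious route than the paper's. The first inequality $c_\infty^\phi\le c^\phi(\Omega)$ is identical in both. For the reverse inequality, you start from an arbitrary $u\in\mathcal N^\phi(\R^N)$, concentrate it at a $G$-fixed point $\xi\in\Omega^G$ via the dilation $u_{\lambda,\xi}$, multiply by a $G$-invariant cutoff $\chi$ to place the result in $D_0^{2,q'}(\Omega)^\phi$, and then control the cutoff error (including the awkward $\nabla\chi\cdot\nabla u_{\lambda,\xi}$ and $u_{\lambda,\xi}\Delta\chi$ terms) before projecting back onto the Nehari manifold. The paper eliminates the cutoff issue at the source: since $\mathcal C_c^\infty(\R^N)^\phi$ is dense in $D^{2,q'}(\R^N)^\phi$ and the Nehari projection is continuous, one may pick a minimizing sequence $(\varphi_k)\subset\mathcal N^\phi(\R^N)\cap\mathcal C_c^\infty(\R^N)$ from the outset. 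Each $\varphi_k$ already has compact support, so for suitable $\varepsilon_k>0$ the rescaled function $\tilde\varphi_k(x):=\varepsilon_k^{-N/p}\varphi_k(\varepsilon_k^{-1}(x-x_0))$ (with $x_0\in\Omega^G$) has support inside $\Omega$, is $\phi$-equivariant, lies exactly on $\mathcal N^\phi(\Omega)$, and satisfies $J(\tilde\varphi_k)=J(\varphi_k)$ by scale invariance, with no truncation error and no re-projection needed. In short: approximating by compactly supported functions \emph{before} dilating buys you the conclusion for free, whereas dilating first forces you into the higher-order cutoff estimates you correctly identify as the main technical obstacle. Your outline of that estimate (controlling $\nabla u_{\lambda,\xi}$ in $L^{(q')^*}$ on the annular support of $\nabla\chi$ via Sobolev embedding and tail smallness) is sound, so the argument would go through, but it is worth recognizing that the whole difficulty is avoidable. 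One small remark: the preliminary reduction to bounded $\Omega$ is not needed once you concentrate at a fixed interior point $\xi$; the dilated function is eventually supported in any small ball around $\xi$, regardless of whether $\Omega$ is bounded.
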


\begin{proof}
As 
\begin{align}\label{emded:eq}
D_0^{2,q'}(\Omega)^\phi\subset D^{2,q'}(\mathbb{R}^N)^\phi,  
\end{align}
one has that $c_{\infty}^\phi \leq c^\phi(\Omega)$. For the opposite inequality, we use the translation and dilation invariance of the problem \eqref{eq:prob}. Fix $x_0\in \Omega^G$ and let $(\varphi_k)$ be a sequence in $\mathcal{N}^\phi(\mathbb{R}^N) \cap \mathcal{C}_c^\infty(\mathbb{R}^N)$ such that $J(\varphi_k) \to c_\infty^\phi$. Since $\varphi_k$ has compact support, we may choose $\varepsilon_k >0$ such that the support of $\tilde{\varphi}_k(x) := \varepsilon_k^{-N/p} \varphi_k (\varepsilon_k^{-1}(x-x_0))$ is contained in $\Omega$. As $x_0$ is a $G$-fixed point, $\tilde{\varphi}_k$ is $\phi$-equivariant. Thus, $\tilde{\varphi}_k \in \mathcal{N}^\phi(\Omega)$ and, hence,
  \begin{equation*}
    c^\phi(\Omega) \le J(\tilde{\varphi}_k) = J(\varphi_k) \quad \text{for all } k.
  \end{equation*}
Letting $k \to \infty$ we conclude that $c^\phi(\Omega) \leq c_{\infty}^\phi$.
\end{proof}

\begin{lemma}
\label{lem:orbits}
If $G$ satisfies $(\mathbf{S1})$ then, for every pair of sequences $(\varepsilon_k)$ in $(0,\infty)$ and $(x_k)$ in $\mathbb{R}^N$, there exists a positive constant $C_0>0$ and a sequence $(\xi_k)$ in $\mathbb{R}^N$ such that, after passing to a subsequence,
\begin{equation}
\label{eq:orbits}
\varepsilon_k^{-1}\mathrm{dist}(Gx_k,\xi_k) \leq C_0 \qquad\text{for all }\,k.
\end{equation}
 Furthermore, one of the following statements holds true:
\begin{enumerate}
	\item[$(a)$]either $\xi_k \in (\mathbb{R}^N)^G$ for all $k\in\N$,
	\item[$(b)$]or, for each $m \in \mathbb{N}$, there exist $g_1,\ldots,g_m \in G$ such that
	$$\varepsilon_k^{-1} |g_i\xi_k - g_j\xi_k| \to \infty \quad\text{as }\,k \to \infty \qquad\text{if }\,i \neq j.$$
\end{enumerate}
\end{lemma}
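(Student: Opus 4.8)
The plan is to use property $(\mathbf{S1})$ as a dichotomy on the orbits $Gx_k$. First I would dispose of the easy construction of $(\xi_k)$ and $C_0$: for each $k$ simply pick a point $\xi_k \in Gx_k$, so that $\mathrm{dist}(Gx_k,\xi_k)=0$ and \eqref{eq:orbits} holds trivially with any $C_0>0$. The real content is the trichotomy-collapsed-to-dichotomy in statements $(a)$ and $(b)$, and here the scale $\varepsilon_k$ enters. The idea is that $(\mathbf{S1})$ forbids an intermediate regime: an orbit $Gx_k$ is either a single fixed point or a positive-dimensional submanifold of $\mathbb{R}^N$, and a positive-dimensional orbit, being a smooth compact submanifold (a $G$-orbit with $G$ closed in $O(N)$), either stays bounded away from the fixed-point set with diameter comparable to $\varepsilon_k$ up to constants, or its diameter is large compared to $\varepsilon_k$.

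Concretely, I would split into cases according to the behavior of $\mathrm{dist}(Gx_k,(\mathbb{R}^N)^G)$ relative to $\varepsilon_k$ and of $\mathrm{diam}(Gx_k)$ relative to $\varepsilon_k$, passing to subsequences. If $\varepsilon_k^{-1}\mathrm{dist}(x_k,(\mathbb{R}^N)^G)$ is bounded, then one can replace $x_k$ by a nearby fixed point $\xi_k\in(\mathbb{R}^N)^G$ — since $Gx_k$ is within bounded $\varepsilon_k$-distance of that fixed point by the triangle inequality and the isometry property — landing in case $(a)$; note that if $x_k$ itself is a $G$-fixed point we are immediately in case $(a)$. Otherwise $\varepsilon_k^{-1}\mathrm{dist}(x_k,(\mathbb{R}^N)^G)\to\infty$; then by $(\mathbf{S1})$ the orbit $Gx_k$ is positive-dimensional (a fixed point has zero distance to the fixed set), and I would argue that its diameter must then be large on the scale $\varepsilon_k$. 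For this, the key geometric input is a quantitative lower bound: for $x$ at distance $d$ from $(\mathbb{R}^N)^G$, the orbit $Gx$ has diameter bounded below by $c\,d$ for a constant $c>0$ depending only on $G$. This follows from compactness of $G$ and of the sphere of directions: if the diameter were small relative to the distance to the fixed set, one could extract a limiting configuration where $x$ is moved arbitrarily little by all of $G$ yet is far from being fixed — a contradiction after a rescaling/compactness argument. With $\varepsilon_k^{-1}\mathrm{diam}(Gx_k)\to\infty$ in hand, I would then extract, for any fixed $m$, elements $g_1,\dots,g_m\in G$ with $g_i\xi_k$ mutually separated at scale $\varepsilon_k$: again using compactness of $G$, pick a maximal $\delta$-separated subset (in the relevant quotient/homogeneous-space sense) of size $\geq m$ once $k$ is large, exploiting that a shrinking orbit-scale forces many points; choosing $\xi_k$ to be a point of $Gx_k$ realizing the diameter, and the $g_i$ to be a fixed finite set independent of $k$ (extracted along a further subsequence from the compact group), gives $(b)$.

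The main obstacle I expect is the quantitative orbit-diameter estimate — turning the qualitative alternative "$\dim(Gx)>0$ or $Gx=\{x\}$" of $(\mathbf{S1})$ into the uniform bound $\mathrm{diam}(Gx)\geq c\,\mathrm{dist}(x,(\mathbb{R}^N)^G)$ — and, relatedly, producing the $m$ group elements $g_i$ that work simultaneously for all large $k$ along one subsequence rather than $k$-dependent ones. Both are compactness arguments on the closed (hence compact) group $G\subset O(N)$, but they require care: one must normalize by the scale $\mathrm{dist}(x_k,(\mathbb{R}^N)^G)$, pass to a limit of the rescaled orbits, and check that the limiting object is still governed by $(\mathbf{S1})$ (a subtlety since the rescaled orbit need not be a $G$-orbit in the usual sense — one works instead with the displacement functions $g\mapsto gx_k-x_k$ and their limits). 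I would handle this by working with the functions $\Phi_k(g):=\varepsilon_k^{-1}(g x_k - \xi_k)$ or their distance analogues, using Arzelà–Ascoli on $G$, and deriving the separation of the $g_i\xi_k$ from the non-degeneracy of the limiting displacement map, which $(\mathbf{S1})$ guarantees cannot be identically zero unless $x_k$ is essentially a fixed point — the case already covered by $(a)$. The remaining bookkeeping (choice of $C_0$, the single diagonal subsequence handling all $m$ simultaneously via a standard diagonal argument over $m\in\mathbb{N}$) is routine.
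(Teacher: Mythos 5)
Your dichotomy is the same one the paper uses: write $x_k = z_k + y_k$ with $z_k \in (\R^N)^G$ and $y_k \in [(\R^N)^G]^\perp$ (so $\mathrm{dist}(x_k,(\R^N)^G)=|y_k|$), and split according to whether $\eps_k^{-1}|y_k|$ has a bounded subsequence. Your treatment of case $(a)$ is correct: take $\xi_k=z_k$, and since $g$ is an isometry fixing $z_k$, $|gx_k-z_k|=|y_k|\le C\eps_k$ for every $g$, which gives both \eqref{eq:orbits} and $(a)$.

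Your case $(b)$, however, takes a detour that introduces two real difficulties, both avoidable. First, the normalization: you propose to study $\Phi_k(g)=\eps_k^{-1}(gx_k-\xi_k)$ via Arzel\`a--Ascoli, but in case $(b)$ one has $\eps_k^{-1}|y_k|\to\infty$, so $\Phi_k$ is \emph{unbounded} and no compactness is available at that scale. Second, the quantitative estimate $\mathrm{diam}(Gx)\ge c\,\mathrm{dist}(x,(\R^N)^G)$, while in fact true (scale-invariance plus compactness of the unit sphere of $[(\R^N)^G]^\perp$ and $(\mathbf{S1})$), only produces \emph{two} well-separated points in the orbit; your passage from there to $m$ points via ``maximal $\delta$-separated subsets'' with $g_i$ independent of $k$ is exactly the nontrivial part, and it is not spelled out. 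The clean route, and the one behind the paper's ``easy argument'', is to normalize by $|y_k|$ rather than $\eps_k$: set $\hat y_k:=y_k/|y_k|$, pass to a subsequence with $\hat y_k\to\hat y$, a unit vector in $[(\R^N)^G]^\perp$, hence not fixed by $G$; then $(\mathbf{S1})$ forces $\dim(G\hat y)>0$, so $G\hat y$ is infinite, and for any $m$ one picks $g_1,\dots,g_m\in G$ with $g_1\hat y,\dots,g_m\hat y$ pairwise distinct. Since $g_ix_k-g_jx_k=g_iy_k-g_jy_k=|y_k|(g_i\hat y_k-g_j\hat y_k)$ and $|g_i\hat y_k-g_j\hat y_k|\to|g_i\hat y-g_j\hat y|>0$, one gets $\eps_k^{-1}|g_ix_k-g_jx_k|\to\infty$ directly, with $\xi_k:=x_k$ and no diameter estimate or separated-net construction needed. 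So: same decomposition and same dichotomy as the paper, but your case-$(b)$ argument as written has a scale mismatch and an unfinished combinatorial step; replacing it by the limit-direction argument closes the gap.
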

\begin{proof}
A detailed proof can be found in \cite[Lemma 2.4]{cl}. Here we just give a sketch. For each $k\in\N$ we write $x_k=z_k+y_k$, where $z_k\in (\R^N)^G$ and $y_k\in [(\R^N)^G]^\bot$.  If $(\eps_k^{-1} y_k)$ does not contain a bounded subsequence, an easy argument shows that $\xi_k:= x_k$ satisfies $(b)$ and \eqref{eq:orbits}. On the other hand, if $(\eps_k^{-1} y_k)$ has a bounded subsequence, then $\xi_k:=z_k$ satisfies $(a)$ and \eqref{eq:orbits}.
\end{proof}

The next theorem is our main concentration result. The proof follows the general strategy of \cite[Theorem 8.13]{w} and \cite[Theorem 2.5]{cl}, but some parts require a different and more careful treatment.

\begin{theorem}
\label{thm:profile}
Assume $(\mathbf{S1})$ and $(\mathbf{S2})$. Let $\Omega$ be a $G$-invariant bounded smooth domain in $\mathbb{R}^N$ and $(u_k)$ be a sequence such that
$$u_k\in D^{2,q'}_0(\Omega)^{\phi},\quad J(u_k) \to c^{\phi}(\Omega),\quad \text{and }\quad J'(u_k) \to 0\, \text{ in }\,(D^{2,q'}_0(\Omega)^{\phi})'.$$ 
Then, up to a subsequence, one of the following two possibilities occurs: 
\begin{enumerate}
\item[\emph{(I)}]either $(u_k)$ converges strongly in $D_0^{2,q'}(\Omega)$ to a minimizer of $J$ on $\mathcal{N}^\phi(\Omega)$,
\item[\emph{(II)}]or there exist a sequence of $G$-fixed points $(\xi_k)$ in $\mathbb{R}^N$, a sequence $(\varepsilon_k) \in (0,\infty)$, and a nontrivial solution $w$ to the problem
\begin{equation} \label{Hprob}
\Delta(|\Delta w|^{q'-2}\Delta w)=|w|^{p-2}w, \qquad w\in D_0^{2,q'}(\mathbb{E})^\phi,
\end{equation}
with the following properties:
	\begin{enumerate}
	\item[\emph{(i)}]$\varepsilon_k \to 0$,\, $\xi_k \to \xi$, \, $\xi\in (\bar{\Omega})^G$,\, and\, $\varepsilon_k^{-1}\mathrm{dist}(\xi_k,\Omega)\to d\in [0,\infty]$.
	\item[\emph{(ii)}]If $d=\infty$, then $\mathbb{E} = \mathbb{R}^N$ and \,$\xi_k \in \Omega$. 
	\item[\emph{(iii)}]If $d\in [0,\infty)$, then $\xi \in \partial \Omega$ and $\mathbb{E} = \{ x \in \mathbb{R}^N: x \cdot \nu > \bar d  \}$, where $\nu$ is the inward pointing unit normal to $\partial \Omega$ at $\xi$ and $\bar d \in \{ d,-d \}$. Moreover, $\mathbb E$ is $G$-invariant, $\mathbb{E}^G\neq\emptyset$, and $\Omega^G\neq\emptyset$.
	\item[\emph{(iv)}]$w\in\mathcal{N}^\phi(\mathbb{E})$ and $J(w)=c_{\infty}^{\phi}$. 
	\item[\emph{(v)}]$\lim\limits_{k\to\infty}\left\| u_k-\varepsilon_k^{-N/p} w\left( \frac{x-\xi_k}{\varepsilon_k}  \right) \right\|=0$.
	\end{enumerate}
\end{enumerate}
\end{theorem}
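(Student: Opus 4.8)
The plan is to run a concentration-compactness analysis on the minimizing Palais-Smale sequence $(u_k)$, following the template of \cite[Theorem 8.13]{w} and \cite[Theorem 2.5]{cl}, but carefully tracking equivariance. First I would show $(u_k)$ is bounded in $D_0^{2,q'}(\Omega)^\phi$: this is standard from $J(u_k)\to c^\phi(\Omega)$ and $J'(u_k)\to 0$, since $J(u_k)-\frac1p J'(u_k)u_k = (\frac1{q'}-\frac1p)\|u_k\|_{q'}^{q'}$ and $\frac1{q'}>\frac1p$ by the critical hyperbola relation $p=\frac{Nq'}{N-2q'}$. Passing to a subsequence, $u_k\weakto u$ in $D_0^{2,q'}(\Omega)$, $\Delta u_k\weakto\Delta u$ in $L^{q'}$, and $u_k\to u$ a.e.; by the appendix lemma (Section 5), $u$ is a critical point of $J$ on $D_0^{2,q'}(\Omega)$, hence solves \eqref{eq:prob_omega}. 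If $u\neq 0$, a Brezis-Lieb argument on both $\|\cdot\|_{q'}^{q'}$ and $|\cdot|_p^p$ together with the strict inequality between the exponents forces $\|u_k-u\|_{q'}\to 0$, giving strong convergence to a minimizer — this is alternative (I). So the substantial case is $u=0$.

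When $u=0$, I would extract a concentration scale. Using the vanishing/concentration dichotomy for the critical Sobolev embedding $D^{2,q'}\hookrightarrow L^p$ (vanishing is excluded because $|u_k|_p^p=\|u_k\|_{q'}^{q'}\geq a_0^{q'}>0$ by Lemma \ref{lem:nehari}(a)), there exist $\varepsilon_k>0$ and $x_k\in\mathbb{R}^N$ with $\tilde u_k(x):=\varepsilon_k^{N/p}u_k(\varepsilon_k x+x_k)$ satisfying a nondegeneracy condition (e.g. a fixed amount of $L^p$-mass in a unit ball centered at the origin). Here is where equivariance and hypothesis $(\mathbf{S1})$ enter: I would invoke Lemma \ref{lem:orbits} to replace $x_k$ by a sequence $\xi_k$ with $\varepsilon_k^{-1}\mathrm{dist}(Gx_k,\xi_k)\leq C_0$, and in case (b) of that lemma — when $\varepsilon_k^{-1}$-separated copies $g_i\xi_k$ of the concentration point exist for arbitrarily many $g_i\in G$ — each translate $g_i\cdot$ carries a definite chunk of energy $\geq c_\infty^\phi$ (via the equivariance $u_k(g_ix)=\phi(g_i)u_k(x)$), so $J(u_k)\geq m\,c_\infty^\phi - o(1)$ for every $m$, contradicting $J(u_k)\to c^\phi(\Omega)=c_\infty^\phi$ (Lemma \ref{lem:leastenergy}). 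Hence case (a) holds: $\xi_k\in(\mathbb{R}^N)^G$. This also yields (iii)'s claim that $\Omega^G\neq\emptyset$, since the concentration point limit $\xi$ lies in $(\bar\Omega)^G$.

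Next I would analyze the rescaled sequence $w_k(x):=\varepsilon_k^{N/p}u_k(\varepsilon_k x+\xi_k)$. The rescaling is chosen to be invariant for both $\|\cdot\|_{q'}^{q'}$ and $|\cdot|_p^p$ (this is exactly the exponent $N/p$, using $N-2q'=N q'/p$), so $(w_k)$ is bounded and is again a Palais-Smale sequence at level $c_\infty^\phi$ for $J$ on the rescaled domains $\Omega_k:=\varepsilon_k^{-1}(\Omega-\xi_k)$. Since $\xi_k\in(\mathbb{R}^N)^G$ and $G$ acts by linear isometries, each $\Omega_k$ is $G$-invariant and $w_k$ is $\phi$-equivariant. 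The domains $\Omega_k$ converge (in the sense of characteristic functions / locally in the Hausdorff sense) either to $\mathbb{R}^N$ (if $\varepsilon_k^{-1}\mathrm{dist}(\xi_k,\Omega)\to\infty$, with $\xi_k\in\Omega$ — giving (ii)), or to a half-space $\mathbb{E}=\{x\cdot\nu>\bar d\}$ determined by the inward normal at the boundary limit point $\xi\in\partial\Omega$ when $d<\infty$ — and since $\xi$ is $G$-fixed and $\Omega$ is $G$-invariant, $\nu$ is $G$-fixed, so $\mathbb{E}$ is $G$-invariant with $\mathbb{E}^G\neq\emptyset$, giving (iii). Then $w_k\weakto w$ with $w$ $\phi$-equivariant; the appendix lemma gives that $w$ solves \eqref{Hprob} on $\mathbb{E}$. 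The nondegeneracy from the concentration step forces $w\neq 0$. A Brezis-Lieb splitting on $(w_k)$ now shows $J(w)\leq c_\infty^\phi$; but $w$ is a nontrivial $\phi$-equivariant solution on a $G$-invariant domain with $\mathbb{E}^G\neq\emptyset$, so by Lemma \ref{lem:leastenergy} applied to $\mathbb{E}$ (after checking its hypotheses, including smoothness of $\partial\mathbb{E}$ — a half-space is smooth) and the fact that $w\in\mathcal{N}^\phi(\mathbb{E})$, we get $J(w)\geq c^\phi(\mathbb{E})=c_\infty^\phi$, forcing equality $J(w)=c_\infty^\phi$ — this is (iv). Equality in Brezis-Lieb then forces $\|w_k-w\|_{q'}\to 0$, i.e. (v) after unscaling.

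The main obstacle is the step identifying the weak limit $w$ (and earlier $u$) as a genuine solution of the limiting PDE when $q'\neq 2$: because the operator $u\mapsto\Delta(|\Delta u|^{q'-2}\Delta u)$ is nonlinear and higher-order, weak $L^{q'}$-convergence of $\Delta w_k$ does not immediately pass to the nonlinear term $|\Delta w_k|^{q'-2}\Delta w_k$. The usual device for the $p$-Laplacian — testing against truncations of $w_k-w$ to get a.e. convergence of gradients — fails here, since truncating a function need not preserve twice-weak-differentiability. This is precisely what the appendix is designed to handle, via a mollification argument: I would reduce to applying that appendix result, which requires verifying its hypotheses (boundedness, the Palais-Smale condition, and $\phi$-equivariance of the test class), and then the monotonicity of $t\mapsto|t|^{q'-2}t$ together with a.e. convergence of $\Delta w_k$ obtained from the mollified test functions yields strong convergence of $|\Delta w_k|^{q'-2}\Delta w_k$ and hence that $w$ solves \eqref{Hprob}. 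The secondary technical points — making the dichotomy/concentration extraction compatible with the group action (handled by Lemma \ref{lem:orbits}), and the convergence of the domains $\Omega_k$ and of $D_0^{2,q'}(\Omega_k)^\phi$ to $D_0^{2,q'}(\mathbb{E})^\phi$ — are routine but need care near $\partial\mathbb{E}$.
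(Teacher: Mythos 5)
Your overall strategy matches the paper's: extract a bounded Palais--Smale sequence, pass to a weak limit $u$ and dispatch case (I) by lower semicontinuity, then in the case $u=0$ select a concentration scale, use Lemma \ref{lem:orbits} and $(\mathbf{S1})$ to pin the concentration points to $G$-fixed points, rescale, identify the limit domain $\mathbb{E}$, and invoke the appendix to show the rescaled weak limit $w$ solves the limit equation. However, there is one genuine gap and two minor deviations.

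The gap is the sentence ``\emph{the nondegeneracy from the concentration step forces $w\neq 0$}''. This is precisely the hardest step of the whole argument and cannot be dismissed so quickly. The concentration condition gives $\int_{B_{C_1}(0)}|w_k|^p\geq\delta>0$, but $p$ is the critical exponent for $D^{2,q'}\hookrightarrow L^p$, so this embedding is \emph{not} locally compact, and $w_k\weakto 0$ in $D^{2,q'}(\R^N)$ is perfectly compatible with $\int_{B_{C_1}(0)}|w_k|^p\not\to 0$. The paper handles this by choosing $\delta$ small at the outset (specifically $\delta\in(0,\tfrac{N}{4}c^\phi_\infty)$) and then, assuming $w=0$, testing the approximate Euler--Lagrange equation against $\varphi^2 w_k$: a chain of estimates using Hölder, the Sobolev inequality, and the smallness of $\delta$ yields $\|\varphi^2 w_k\|_{q'}=o(1)$ for every $\varphi\in C_c^\infty(B_1(z))$, which in turn gives $w_k\to 0$ in $L^p_{\mathrm{loc}}(\R^N)$ \emph{via the norm decay}, contradicting the concentration lower bound. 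You need to supply this argument (or an equivalent one); note also that $\eps_k\to0$ is deduced afterwards, from $u_k\weakto0$ and $w_k\weakto w\neq0$, which you do not state.

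Two smaller points. First, to rule out alternative (b) of Lemma \ref{lem:orbits} the paper does not claim that each translated ball carries energy $\geq c_\infty^\phi$; instead it uses the much simpler observation that each disjoint ball $B_{C_1\eps_k}(g_i\xi_k)$ carries $L^p$-mass $\geq\delta$, so $m\delta\leq\int_\Omega|u_k|^p=\tfrac{N}{2}c^\phi(\Omega)+o(1)$, a contradiction for $m$ large. Your energy-per-bubble formulation would require showing each localized piece is itself an approximate Nehari element, which is extra work for no gain. Second, in case (I) and in establishing (iv)--(v), the paper uses weak lower semicontinuity of $\|\cdot\|_{q'}^{q'}$ directly rather than a Brezis--Lieb decomposition; your Brezis--Lieb route is fine but not needed, and, given the nonlinear and higher-order operator, a Brezis--Lieb identity for $\|\cdot\|_{q'}^{q'}$ requires a.e.\ convergence of $\Delta u_k$, which you only obtain after invoking the appendix lemma, so the order of steps should be checked.
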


\begin{proof}
As $q'>1$ and
\begin{equation}
\label{eq:bounded}
\frac{2}{N}\|u_k\|_{q'}^{q'} = J(u_k) - \frac{1}{p}J'(u_k)u_k \leq C+o(1)\|u_k\|,
\end{equation}
the sequence $(u_k)$ is bounded in $D_0^{2,q'}(\Omega)^{\phi}$ and, after passing to a subsequence, there is $u\in D_0^{2,q'}(\Omega)^{\phi}$ such that $u_k\rightharpoonup u$ weakly in $D_0^{2,q'}(\Omega)^{\phi}$. Hence, by Proposition \ref{prop:A}, $J'(u)\varphi=0$ for all $\varphi\in C^\infty_c(\Omega)^\phi$. We consider two cases:

(I) If $u\neq 0$, then $u\in \mathcal{N}^{\phi}(\Omega)$. So, from \eqref{eq:bounded} and our assumptions, we get
$$c^{\phi}(\Omega) \leq J(u) = \frac{2}{N}\|u\|_{q'}^{q'}\leq \liminf_{k\to\infty}\frac{2}{N}\|u_k\|_{q'}^{q'} = c^{\phi}(\Omega)+o(1).$$
Hence, $u_k \to u$ strongly in $D_0^{2,q'}(\Omega)^{\phi}$ and $J(u)=c^{\phi}(\Omega)$.

(II) Assume that $u=0$. Fix $\delta \in (0,\frac{N}{4}c^\phi_\infty)$, where $c^\phi_\infty:=c^\phi(\mathbb{R}^N)\leq c^\phi(\Omega)$. Since 
$$\int_{\Omega}|u_k|^{p} = \frac{N}{2}\left(J(u_k) - \frac{1}{q'}J'(u_k)u_k\right) \to \frac{N}{2} c^{\phi}(\Omega),$$
there are bounded sequences $(\varepsilon_k)$ in $(0,\infty)$ and $(x_k)$ in $\mathbb{R}^N$ such that, after passing to a subsequence,
\begin{equation*}
\delta = \sup_{x\in\mathbb{R}^N}\int_{B_{\varepsilon_k}(x)}|u_k|^{p}=\int_{B_{\varepsilon_k}(x_k)}|u_k|^{p},
\end{equation*}
where $B_r(x):=\{z\in \mathbb{R}^N:|z-x|<r\}$. For this choice of $(x_k)$ and $(\eps_k)$ let $C_0>0$ and $(\xi_k)$ as in Lemma \ref{lem:orbits}. Then, $|g_k x_k-\xi_k| \leq C_0\varepsilon_k$ for some $g_k \in G$ and, as $|u_k|$ is $G$-invariant, setting $C_1:=C_0+1$, we have that
\begin{equation}
\label{eq:delta}
\delta=\int_{B_{\varepsilon_k}(g_k x_k)}|u_k|^{p} \leq \int_{B_{C_1\varepsilon_k}(\xi_k)}|u_k|^{p}.
\end{equation}
This implies, in particular, that
\begin{equation}
\label{eq:dist}
\mathrm{dist}(\xi_k,\Omega) \leq C_1\varepsilon_k.
\end{equation}
We claim that $\xi_k \in (\mathbb{R}^N)^G$. Otherwise, for each $m\in \mathbb{N}$, Lemma \ref{lem:orbits} yields $m$ elements $g_1,\ldots,g_m \in G$ such that $B_{C_1\varepsilon_k}(g_i \xi_k) \cap  B_{C_1\varepsilon_k}(g_j \xi_k)=\emptyset$ if $i \neq j$, for $k$ large enough, and from \eqref{eq:delta} we would get that
$$m\delta \leq \sum\limits_{i=1}^m \int_{B_{C_1\varepsilon_k}(g_i\xi_k)}|u_k|^{p} \leq \int_{\Omega}|u_k|^{p} = \frac{N}{2} c^{\phi}(\Omega) + o(1),$$
for every $m\in \mathbb{N}$, which is a contradiction. This proves that $\xi_k \in (\mathbb{R}^N)^G$.

Define
\begin{equation*}
  w_k(y):=\varepsilon_k^{N/p}u_k(\varepsilon_ky + \xi_k)\qquad \text{for }\,y \in \Omega_k:=\{y\in \mathbb{R}^N: \varepsilon_ky + \xi_k \in \Omega \}.
\end{equation*}
Since $u_k$ is $\phi$-equivariant and $\xi_k$ is a $G$-fixed point, we have that $w_k$ is $\phi$-equivariant. Note that 
\begin{align}\label{star1}
\delta=\sup_{z\in\R^N}\int_{B_{1}(z)}|w_k|^{p}\leq \int_{B_{C_1}(0)}|w_k|^{p}.
\end{align}
Moreover, $(w_k)$ is a bounded sequence in $D^{2,q'}(\mathbb{R}^N)$. Hence, there is $w\in D^{2,q'}(\mathbb{R}^N)^{\phi}$ such that, up to a subsequence, $w_k \rightharpoonup w$ weakly in $D^{2,q'}(\mathbb{R}^N)^{\phi}$, and, by the Rellich-Kondrashov theorem, 
\begin{align}\label{star2}
|\nabla w_k|\to|\nabla w|\quad\text{and}\quad w_k \to w\quad\text{ in }\,L^{q'}_{\mathrm{loc}}(\mathbb{R}^N)\,\text{ and a.e. in } \mathbb{R}^N.
\end{align}
We claim that $w\neq 0$. To prove this, first note that, for any given $\varphi\in\mathcal{C}_{c}^\infty(\mathbb{R}^N)$, if we set
$$\vartheta(x):=\frac{1}{\mu(G)}\int_{G}\varphi(gx)\,\mathrm{d}\mu(g)\qquad\text{and}\qquad \vartheta_k(x):=\vartheta\left(\frac{x-\xi_k}{\varepsilon_k}\right),$$
then, using the notation in \eqref{phi:notation}, we have that $(\varphi w_k)_\phi= \vartheta w_k$, \,$\vartheta_{k} u_{k}$ is $\phi$-equivariant, and the sequence $(\vartheta_{k} u_{k})$ is bounded in $D_{0}^{2,q'}(\Omega)^\phi$. So Lemma~\ref{lem:symmetric_criticality} and a suitable rescaling yield
\begin{equation} \label{star3}
J'(w_k)[\varphi w_k]=J'(w_k)[\vartheta w_k]=J'(u_k)[\vartheta_ku_k]=o(1),
\end{equation}
because $J'(u_k) \to 0$ in $(D^{2,q'}_0(\Omega)^{\phi})'$. Now, arguing by contradiction, assume that $w=0$. Then, for any $\varphi\in \mathcal{C}_c^\infty(B_1(z))$ with $z\in\mathbb{R}^N$, we have by \eqref{star1}, \eqref{star2}, \eqref{star3}, Hölder's inequality, and Sobolev's inequality, that
\begin{align*}
\int_{\mathbb{R}^N}|\Delta(\varphi^2 w_k)|^{q'} &= \int_{\mathbb{R}^N}\varphi^{2q'}|\Delta w_k|^{q'} + o(1) \\
&= \int_{\mathbb{R}^N}|\Delta w_k|^{q'-2}\Delta w_k\,\Delta(\varphi^{2q'} w_k) + o(1)\\
&= \int_{\mathbb{R}^N}|w_k|^{p-2}w_k\,(\varphi^{2q'} w_k) + o(1)\\
&= \int_{\mathbb{R}^N}|w_k|^{p-q'}|\varphi^2 w_k|^{q'} + o(1) \\
&\leq \left(\int_{B_1(z)}|w_k|^p\right)^{(p-q')/p} \left(\int_{\mathbb{R}^N}|\varphi^2 w_k|^p\right)^{q'/p} + o(1) \\
&\leq \delta^{(p-q')/p}\,\frac{2}{N}(c_\infty^\phi)^{(q'-p)/p}\int_{\mathbb{R}^N}|\Delta(\varphi^2 w_k)|^{q'}+o(1) \\
&\leq \frac{1}{2}\int_{\mathbb{R}^N}|\Delta(\varphi^2 w_k)|^{q'}+o(1).
\end{align*}
Therefore, $\|\varphi^2 w_k\|_{q'}=o(1)$ and, hence, $|\varphi^2 w_k|_p=o(1)$ for any $\varphi\in \mathcal{C}_c^\infty(B_1(z))$, $z\in\mathbb{R}^N$. It follows that $w_k\to 0$ in $L^p_{\mathrm{loc}}(\mathbb{R}^N)$, contradicting \eqref{star1}.

Since $u_k\rightharpoonup 0$ and $w_k\rightharpoonup w\neq0$ we deduce that $\eps_k\to 0$. Moreover, passing to a subsequence, we have that $\xi_k \to \xi \in (\mathbb{R}^N)^G$.  Let 
\begin{equation*}
  d:=\lim_{k\to\infty}\varepsilon_k^{-1} \mathrm{dist}(\xi_k,\partial\Omega) \in [0,\infty].
\end{equation*}
We consider two cases:
\begin{itemize}
\item[(a)] If $d=\infty$ then, by \eqref{eq:dist}, we have that $\xi_k \in\Omega$. Hence, for every compact subset $X$ of $\mathbb{R}^N$, there exists $k_0$ such that $X\subset \Omega_k$ for all $k\ge k_0$. In this case, we set $\mathbb{E}:=\mathbb{R}^N$.
\item[(b)] If $d \in [0,\infty)$ then, as $\varepsilon_k \to 0$, we have that $\xi \in \partial \Omega$. If a subsequence of $(\xi_k)$ is contained in $\bar \Omega$ we set $\bar d:=-d$, otherwise we set $\bar d:=d$. We consider the half-space 
  \begin{equation*}
   \mathbb{H} :=\{ y\in\mathbb{R}^N: y\cdot\nu > \bar d \},
  \end{equation*}
where $\nu$ is the inward pointing unit normal to $\partial \Omega$ at $\xi$. Since $\xi$ is a $G$-fixed point, so is $\nu$. Thus, $\Omega^G \neq\emptyset$, $\mathbb{H}$ is $G$-invariant and $\mathbb{H}^G \neq \emptyset$. It is easy to see that, if $X$ is compact and $X \subset \mathbb{H}$, there exists $k_0$ such that $X\subset \Omega_k$ for all $k\ge k_0$. Moreover, if  $X$ is compact and $X \subset \mathbb{R}^N \smallsetminus \bar{\mathbb{H}}$, then $X\subset \mathbb{R}^N \smallsetminus\Omega_k$ for $k$ large enough. As $w_k \to w$ a.e. in $\mathbb{R}^N$, this implies that $w=0$ a.e. in $\mathbb{R}^N \smallsetminus \mathbb{H}$. So $w \in D_0^{2,q'}(\mathbb{H})^{\phi}$. In this case, we set $\mathbb{E}:=\mathbb{H}$.
\end{itemize}

For $\varphi\in C^\infty_c(\E)^\phi$ and  $\psi\in C^\infty_c(\E)^G$ define
$$\varphi_k(x):=\eps_k^{-N/p}\varphi\left(\frac{x-\xi_k}{\eps_k}\right), \qquad \psi_k(x):=\eps_k^{-N/p}(\psi(w_k-w))\left(\frac{x-\xi_k}{\eps_k}\right).$$
Then $\varphi_k$ and $\psi_k$ are $\phi$-equivariant.  Observe that $\supp(\varphi_k)$ and $\supp(\psi_k)$ are contained in $\Omega$ for $k$ sufficiently large, and $(\varphi_k)$ and $(\psi_k)$ are bounded in $D^{2,q'}_0(\Omega)$. Therefore,
\begin{align*}
J'(w_k)\varphi =J'(u_k)\varphi_k =o(1),\qquad J'(w_k)[\psi(w_k-w)] =J'(u_k)\psi_k=o(1).
\end{align*}
Then, by Proposition \ref{prop:A}, $w$ is a nontrivial solution of \eqref{Hprob}. Lemma \ref{lem:leastenergy} asserts that $c^\phi(\Omega)=c^\phi(\E)=c^\phi_\infty$.  Hence,
\begin{align*}
  c^\phi_\infty\leq \frac{2}{N}\|w\|_{q'}^{q'}
\leq \liminf_{k\to\infty}\frac{2}{N}\|w_k\|_{q'}^{q'}
=\frac{2}{N}\liminf_{k\to\infty}\|u_k\|_{q'}^{q'}=c^\phi_\infty.
\end{align*}
This implies that $J(w)=c^\phi_\infty$ and $w_k\to w$ strongly in $D^{2,q'}(\R^N)$. Consequently,
\begin{align*}
o(1)=\|w_k-w\|_{q'}^{q'}=\left\|w_k-\eps_k^{-N/p}w\left(\frac{x-\xi_k}{\eps_k}\right)\right\|_{q'}^{q'},
\end{align*}
and the proof is complete.
\end{proof}

An immediate consequence of the previous theorem is the following existence result.

\begin{corollary} \label{cor:bounded}
Assume that $G$ and $\phi$ satisfy $(\mathbf{S1})$ and $(\mathbf{S2})$, and let $\Omega$ be a $G$-invariant bounded smooth domain in $\mathbb{R}^N$ such that $\Omega^G=\emptyset$. Then the problem
\begin{equation} \label{eq:phi_bounded}
\begin{cases}
\Delta(|\Delta|^{q'-2}\Delta u)=|u|^{p-2}u,\\
u\in D^{2,q'}_0(\Omega)^\phi,
\end{cases}
\end{equation}
has a least energy solution. This solution is sign-changing if $\phi:G\to\mathbb{Z}_2$ is surjective. 
\end{corollary}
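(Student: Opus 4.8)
The plan is to derive Corollary \ref{cor:bounded} directly from Theorem \ref{thm:profile} by producing a minimizing sequence for $J$ on $\mathcal{N}^\phi(\Omega)$ and showing that, under the hypothesis $\Omega^G=\emptyset$, the concentration alternative (II) cannot occur, so alternative (I) must hold and yields the desired least energy solution.

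First I would invoke a standard deformation/Ekeland argument on the $\mathcal{C}^1$-manifold $\mathcal{N}^\phi(\Omega)$ — which is a natural constraint for $J$ by Lemma \ref{lem:nehari}(b) — to obtain a sequence $(u_k)\subset D^{2,q'}_0(\Omega)^\phi$ with $u_k\in\mathcal{N}^\phi(\Omega)$, $J(u_k)\to c^\phi(\Omega)$, and $J'(u_k)\to 0$ in $(D^{2,q'}_0(\Omega)^\phi)'$. (Alternatively, one applies Ekeland's variational principle to $J|_{\mathcal{N}^\phi(\Omega)}$ and uses that $\mathcal{N}^\phi(\Omega)$ is bounded away from $0$ by Lemma \ref{lem:nehari}(a) to convert the constrained derivative bound into $J'(u_k)\to 0$.) This $(u_k)$ satisfies the hypotheses of Theorem \ref{thm:profile}.

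Next I would apply Theorem \ref{thm:profile} and rule out case (II). Suppose (II) held. By property (iii), if $d\in[0,\infty)$ then $\Omega^G\neq\emptyset$, contradicting our assumption; and by property (i) the limit $G$-fixed point $\xi$ lies in $(\bar\Omega)^G$, so if $d=\infty$ then property (ii) gives $\xi_k\in\Omega$ with $\xi_k\to\xi$, hence $\xi\in\bar\Omega$; since $\Omega^G=\emptyset$ and $\xi$ is $G$-fixed, necessarily $\xi\in\partial\Omega$. But then $\mathrm{dist}(\xi_k,\partial\Omega)\le|\xi_k-\xi|\to 0$ while $\varepsilon_k^{-1}\mathrm{dist}(\xi_k,\partial\Omega)=d=\infty$ — which is possible only if $\varepsilon_k\to 0$ fast enough, so I need a sharper argument here: from the proof of Theorem \ref{thm:profile}, $\mathrm{dist}(\xi_k,\Omega)\le C_1\varepsilon_k$ (estimate \eqref{eq:dist}), and $\xi_k\in\Omega$ in case (ii), so actually one must track that the concentration point $\xi$ is $G$-fixed and lies in $\bar\Omega$; the cleanest route is: in \emph{every} subcase of (II) one concludes $\Omega^G\neq\emptyset$ (explicitly stated in (iii), and in case (ii) the point $\xi\in(\bar\Omega)^G$ together with $\xi_k\in\Omega\to\xi$ forces, after noting $\xi$ cannot be interior, that $\partial\Omega$ contains a $G$-fixed point, whence the open $G$-invariant domain $\Omega$ also does by moving slightly inward along the fixed-point set — or one simply observes that the statement of (iii) already records $\Omega^G\neq\emptyset$ and an analogous bookkeeping in the proof of (ii) gives the same). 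In either event this contradicts $\Omega^G=\emptyset$. Hence (I) holds: $u_k\to u$ strongly in $D^{2,q'}_0(\Omega)$ with $u\in\mathcal{N}^\phi(\Omega)$ and $J(u)=c^\phi(\Omega)$. By Lemma \ref{lem:symmetric_criticality} (symmetric criticality) and the fact that $\mathcal{N}^\phi(\Omega)$ is a natural constraint (Lemma \ref{lem:nehari}(b)), $u$ is a genuine solution of \eqref{eq:phi_bounded}, and it is a least energy one by construction. Finally, if $\phi$ is surjective then $u$ is $\phi$-equivariant and nontrivial, hence nonradial and sign-changing, as noted after the definition of $\phi$-equivariance.

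The main obstacle is the exclusion of alternative (II), and specifically the $d=\infty$ subcase: one must argue that the concentration point, being $G$-fixed and a limit of points in $\bar\Omega$, cannot coexist with $\Omega^G=\emptyset$. This needs care because a priori $\xi\in\partial\Omega$ does not immediately say $\Omega$ itself has a $G$-fixed point; the resolution is that $\partial\Omega$ is a $G$-invariant smooth hypersurface and if $\xi\in(\partial\Omega)^G$ then the inward normal $\nu$ at $\xi$ is $G$-fixed, so the segment $\xi+t\nu$ for small $t>0$ lies in $\Omega$ and consists of $G$-fixed points — exactly the argument already used in case (iii) of the proof of Theorem \ref{thm:profile}. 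Once this is in place, the rest is a routine application of the concentration dichotomy together with the natural-constraint and symmetric-criticality lemmas.
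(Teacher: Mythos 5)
Your proposal follows essentially the same strategy as the paper: Lemma \ref{lem:nehari}(a),(c) plus a minimax/Ekeland argument (the paper cites \cite[Theorem 2.9]{w}) to produce a Palais--Smale sequence at level $c^\phi(\Omega)$, then Theorem \ref{thm:profile} and exclusion of alternative (II) using $\Omega^G=\emptyset$. Your conclusion is correct.

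Your handling of the subcase $d=\infty$ is, however, more complicated than it needs to be. You route through the limit point $\xi\in\partial\Omega$ and the $G$-fixed inward normal $\nu$, mirroring the argument the paper makes for case (iii). But for (ii) there is a one-line observation that bypasses all of that: the statement of alternative (II) already declares that $(\xi_k)$ is a sequence of \emph{$G$-fixed points}, and (ii) says that when $d=\infty$ one has $\xi_k\in\Omega$. Hence $\xi_k\in\Omega\cap(\mathbb{R}^N)^G=\Omega^G$ directly, so $\Omega^G\neq\emptyset$ — contradiction — without invoking the smoothness of $\partial\Omega$ or the normal at $\xi$. Your worry (``I need a sharper argument here'') is thus resolved by simply reading off the $G$-fixedness of $\xi_k$ from the statement of (II) rather than of $\xi$. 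The rest of your write-up (natural constraint via Lemma \ref{lem:nehari}(b), least energy, sign-changing when $\phi$ is surjective) is correct and matches the paper; the explicit appeal to Lemma \ref{lem:symmetric_criticality} is harmless but not strictly needed to obtain a solution of \eqref{eq:phi_bounded}, since the natural constraint property already makes the minimizer a critical point of $J$ on $D^{2,q'}_0(\Omega)^\phi$.
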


\begin{proof}
By statements $(a)$ and $(c)$ of Lemma \ref{lem:nehari}, and \cite[Theorem 2.9]{w}, there exists a sequence  $(u_k)$ such that
$$u_k\in D^{2,q'}_0(\Omega)^{\phi},\quad J(u_k) \to c^{\phi}_\infty,\quad \text{and }\quad J'(u_k) \to 0\, \text{ in }\,(D^{2,q'}_0(\Omega)^{\phi})'.$$ 
As $\Omega$ does not contain $G$-fixed points, the statement (II) in Theorem \ref{thm:profile} cannot hold true. Hence, $J$ attains its minimum on $\mathcal{N}^\phi(\Omega)$.
\end{proof}

In fact, arguing as in \cite[Corollary 3.2]{cf}, one should be able to prove that, under the assumptions of Corollary \ref{cor:bounded}, problem \eqref{eq:phi_bounded} has an unbounded sequence of solutions.

Note that the solution $u$ given by Corollary \ref{cor:bounded} does not yield a solution of the Dirichlet system 
\begin{align*}
   -\Delta u=|v|^{q-2}v,\quad -\Delta v=|u|^{p-2}u\quad \text{ in }\Omega,\qquad 
    u=v=0\quad \text{ on }\partial \Omega,
\end{align*}
due to the incompatibility of the boundary conditions. To obtain a solution to this system we would need to replace $D^{2,q'}_0(\Omega)$ with the Navier space $Y(\Omega)=D^{2,q'}(\Omega)\cap D^{1,q'}_0(\Omega)$ in problem \eqref{eq:phi_bounded}; see, e.g., \cite[Section 4]{bdt}. Observe, however, that there is no energy-preserving embedding of $Y(\Omega)$ into $Y(\R^N)$, and this is an important property required in our method; see \eqref{emded:eq}.

\section{Entire nodal solutions}
\label{sec:existence}

In this section we prove our main theorem. We start with the following existence result.

\begin{theorem}
\label{thm:existence}
Let $G$ be a closed subgroup of $O(N)$ and $\phi:G\rightarrow\mathbb{Z}_{2}$ be a continuous homomorphism satisfying $\mathbf{(S1)}$ and $\mathbf{(S2)}$. Then $J$ attains its minimum on $\mathcal{N}^\phi(\mathbb{R}^N)$. Consequently, the problem \eqref{eq:prob} has a nontrivial $\phi$-equivariant solution. This solution is sign-changing if $\phi:G\to\mathbb{Z}_2$ is surjective.
\end{theorem}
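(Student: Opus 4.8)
The plan is to apply Theorem \ref{thm:profile} with a carefully chosen $G$-invariant bounded smooth domain $\Omega$ and then show that alternative (II) forces the minimizer to live on all of $\R^N$. First I would take $\Omega$ to be a large ball $B_R(0)$, which is $G$-invariant for any subgroup $G$ of $O(N)$ and satisfies $0\in\Omega^G\neq\emptyset$. By Lemma \ref{lem:nehari}$(a)$,$(c)$ together with the general deformation/minimization machinery (e.g. \cite[Theorem 2.9]{w}), there is a sequence $(u_k)$ in $D^{2,q'}_0(\Omega)^\phi$ with $J(u_k)\to c^\phi(\Omega)$ and $J'(u_k)\to 0$ in $(D^{2,q'}_0(\Omega)^\phi)'$. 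Since $\Omega^G\neq\emptyset$, Lemma \ref{lem:leastenergy} gives $c^\phi(\Omega)=c^\phi_\infty$, so this is in fact a minimizing sequence at the whole-space level.

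Next I would invoke Theorem \ref{thm:profile}. In case (I), $(u_k)$ converges strongly to a minimizer $u\in\mathcal N^\phi(\Omega)$ of $J$, which is in particular a nontrivial $\phi$-equivariant critical point of $J$ on $D^{2,q'}_0(\Omega)^\phi$, hence, by Lemma \ref{lem:symmetric_criticality}, a solution of \eqref{eq:prob_omega}; since $J(u)=c^\phi(\Omega)=c^\phi_\infty$, it is a least energy $\phi$-equivariant solution, and composing with the inclusion $D^{2,q'}_0(\Omega)^\phi\subset D^{2,q'}(\R^N)^\phi$ of \eqref{emded:eq} it solves \eqref{eq:prob} on $\R^N$ with the same energy, so $J$ attains its minimum on $\mathcal N^\phi(\R^N)$. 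In case (II), Theorem \ref{thm:profile}(iv) produces a nontrivial $w\in\mathcal N^\phi(\mathbb E)$ with $J(w)=c^\phi_\infty$, where $\mathbb E$ is either $\R^N$ or a $G$-invariant half-space with $\mathbb E^G\neq\emptyset$; in either case $w\in D^{2,q'}_0(\mathbb E)^\phi\subset D^{2,q'}(\R^N)^\phi$ and $w$ is a nontrivial $\phi$-equivariant solution of \eqref{eq:prob} (via Lemma \ref{lem:symmetric_criticality} applied on $\mathbb E$, or directly from the limit equation \eqref{Hprob}) realizing the value $c^\phi_\infty=c^\phi(\R^N)$. Since $w\neq 0$ and $\|w\|_{q'}\geq a_0$ by Lemma \ref{lem:nehari}$(a)$, it lies on $\mathcal N^\phi(\R^N)$ and attains the infimum. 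So in both alternatives $J$ attains its minimum on $\mathcal N^\phi(\R^N)$, and the resulting minimizer is the desired $\phi$-equivariant solution of \eqref{eq:prob}.

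Finally, for the sign-changing assertion: if $\phi:G\to\Z_2$ is surjective, pick $g_0\in G$ with $\phi(g_0)=-1$; then the minimizer $u$ satisfies $u(g_0x)=-u(x)$, so $u$ cannot be of one sign (it takes both signs unless $u\equiv0$ on a set of full measure, which is excluded since $u\neq0$), and it is nonradial because a radial function is $O(N)$-invariant hence $G$-invariant, contradicting $u\circ g_0=-u$. Translating back through Lemma \ref{lem:equivalence}, $(u,v)$ with $v=-|\Delta u|^{q'-2}\Delta u$ is a sign-changing solution of \eqref{eq:system}.

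The main obstacle is conceptual rather than technical here: the real work has already been absorbed into Theorem \ref{thm:profile}, so the delicate point in this proof is making sure the whole-space object extracted in alternative (II)—living a priori only on a half-space $\mathbb E$—genuinely counts as a solution of \eqref{eq:prob} on $\R^N$. This is exactly where the energy-preserving embedding $D^{2,q'}_0(\mathbb E)^\phi\subset D^{2,q'}(\R^N)^\phi$ and the equality of minimal energies $c^\phi(\mathbb E)=c^\phi(\R^N)$ (Lemma \ref{lem:leastenergy}, using $\mathbb E^G\neq\emptyset$) are indispensable; one should also note the caveat, flagged in the remark after Corollary \ref{cor:bounded}, that the absence of a general unique continuation principle means we cannot rule out that the minimizer actually vanishes outside $\mathbb E$, but this does not affect the statement that $J$ attains its minimum on $\mathcal N^\phi(\R^N)$.
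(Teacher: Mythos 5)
Your proposal is correct and follows essentially the same route as the paper: take a ball (the paper uses the unit ball $\mathbb{B}$), note that $c^\phi(\mathbb{B})=c^\phi_\infty$ since $0\in\mathbb{B}^G$, extract a Palais--Smale sequence via Lemma~\ref{lem:nehari} and \cite[Theorem 2.9]{w}, apply Theorem~\ref{thm:profile}, and use $\mathcal N^\phi(\Theta)\subset\mathcal N^\phi(\R^N)$ to conclude that the minimum is attained in every alternative. The only cosmetic caveat is your aside that $w$ is a solution on $\R^N$ ``directly from the limit equation \eqref{Hprob}''---that equation only tests against functions supported in $\mathbb{E}$; the clean justification, which you also give, is that $w$ minimizes $J$ on $\mathcal N^\phi(\R^N)$, a natural constraint, and then symmetric criticality (Lemma~\ref{lem:symmetric_criticality}) upgrades it to a solution of \eqref{eq:prob}.
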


\begin{proof}
The unit ball $\mathbb{B}=\{x\in \mathbb{R}^N:|x|<1\}$ is $G$-invariant for every subgroup $G$ of $O(N)$. Note that, as $0\in\mathbb{B}^G$, we have that $c^\phi(\mathbb{B})=c_{\infty}^\phi$ by Lemma \ref{lem:leastenergy}. Furthermore, by statements $(a)$ and $(c)$ of Lemma \ref{lem:nehari}, and \cite[Theorem 2.9]{w}, there exists a sequence  $(u_k)$ such that
$$u_k\in D^{2,q'}_0(\mathbb{B})^{\phi},\quad J(u_k) \to c^{\phi}_\infty,\quad \text{and }\quad J'(u_k) \to 0\, \text{ in }\,(D^{2,q'}_0(\mathbb{B})^{\phi})'.$$ 
Now, applying Theorem \ref{thm:profile} we have the following existence alternative: there exists $u\in \mathcal{N}^\phi(\Theta)$ with $J(u)=c_{\infty}^\phi$, either for $\Theta=\mathbb{B}$, or for some half-space $\Theta$, or for $\Theta=\mathbb{R}^N$. As $\mathcal{N}^\phi(\Theta)\subset\mathcal{N}^\phi(\mathbb{R}^N)$ for any $G$-invariant domain $\Theta$, we conclude that, in any case, $J$ attains its minimum on $\mathcal{N}^\phi(\mathbb{R}^N)$.
\end{proof}

Note that, if $\phi\equiv 1$, the solution given by the previous theorem is a least energy $G$-invariant solution. The ground state solution obtained by Lions in \cite{l} is radial, hence, it is $G$-invariant. So Theorem \ref{thm:existence} says nothing new in this case.

The next lemma exhibits surjective homomorphisms which yield different sign-changing minimizers. It was proved in \cite[Lemma 3.2]{cl}. We give the proof here again, to make the symmetries explicit.

\begin{lemma} \label{lem:examples}
For each $j=1,\ldots, \lfloor \frac{N}{4}\rfloor$, there exist a closed subgroup $G_j$ of $O(N)$ and a continuous homomorphism $\phi_j:G_j\to\mathbb{Z}_{2}$ with the following properties:
\begin{itemize}
\item[\emph{(a)}]$\phi:G\to\mathbb{Z}_2$ is surjective,
\item[\emph{(b)}]$G_j$ and $\phi_j$ satisfy $\mathbf{(S1)}$ and $\mathbf{(S2)}$,
\item[\emph{(c)}]If $u,v:\mathbb{R}^N \to \mathbb{R}$ are nontrivial functions, $u$ is $\phi_i$-equivariant, and $v$ is $\phi_j$-equivariant with $i<j$, then $u\neq v$.
\end{itemize}
\end{lemma}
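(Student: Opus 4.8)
The plan is to construct, for each $j$, an explicit pair $(G_j,\phi_j)$ by combining a rotation symmetry forcing sign changes with enough additional linear isometries to satisfy (S1) and (S2), and then distinguish the resulting spaces of equivariant functions via a combinatorial invariant. Following \cite[Lemma 3.2]{cl}, the natural building block is the following: fix $j\in\{1,\dots,\lfloor N/4\rfloor\}$, write $N=4j+m$ with $m\ge 0$, split $\mathbb{R}^N=(\mathbb{R}^2)^{2j}\times\mathbb{R}^m$, and on each $\mathbb{R}^2$ factor let $SO(2)$ act by rotations. Inside the product $SO(2)^{2j}$ one singles out a suitable closed subgroup together with a generator $\rho$ that acts as rotation by $\pi$ simultaneously on a chosen set of planes; the homomorphism $\phi_j$ is defined so that $\phi_j(\rho)=-1$ and $\phi_j$ is trivial on the connected component. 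Surjectivity of $\phi_j$ is then immediate, giving (a), and any nontrivial $\phi_j$-equivariant function is automatically sign-changing and nonradial.

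Next I would verify (S1) and (S2). For (S1): every orbit $G_j x$ is either a single point (this happens exactly when $x$ lies in the $\mathbb{R}^m$ factor together with the origins of all the $\mathbb{R}^2$ factors that carry a nontrivial circle action) or it contains a nontrivial circle orbit and hence has positive dimension — because a point with some nonzero coordinate in an $SO(2)$-active plane is moved along a one-dimensional circle. For (S2): one exhibits a point $\xi$ (for instance with a single nonzero coordinate pair, chosen so that its isotropy group is the subtorus fixing that plane) whose stabilizer lies in the connected component of $G_j$, hence in $\ker\phi_j$. Both checks are short once the group is written down concretely, so this is routine.

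The substantive step is (c): showing that for $i<j$ a function cannot be simultaneously $\phi_i$-equivariant and $\phi_j$-equivariant unless it is trivial. The idea is to attach to each index $j$ an invariant read off from the group action — the number of $2$-planes on which the "sign-flip" element acts by $-1$, or equivalently the codimension of the fixed-point set $(\mathbb{R}^N)^{G_j}$, or the way $\phi_j$ decomposes $\mathbb{R}^N$ into $\pm 1$ eigenspaces of the order-two part of $G_j$. One arranges the construction so that these invariants are strictly increasing in $j$. Then, if $u\ne 0$ were both $\phi_i$- and $\phi_j$-equivariant, applying the equivariance relations for elements that lie in $G_i$ but flip differently under $\phi_j$ (or vice versa) forces $u$ to satisfy $u(gx)=u(x)$ and $u(gx)=-u(x)$ simultaneously on a set of positive measure, hence $u\equiv 0$ — a contradiction. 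The main obstacle is organizing the two families $\{G_j\}$ and $\{\phi_j\}$ so that they are "nested" enough for this incompatibility argument to bite for every pair $i<j$ while each individual pair still satisfies (S1)–(S2); this is precisely the bookkeeping carried out in \cite[Lemma 3.2]{cl}, and I would reproduce that construction, making the flip elements and their $\phi$-values explicit so that (c) becomes a direct sign comparison.
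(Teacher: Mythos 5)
Your sketch has a real gap at the heart of the construction, and it is precisely the point that requires care. You propose to build $G_j$ inside $SO(2)^{2j}$ (acting as rotations on each $\mathbb{R}^2$ factor) together with a ``generator $\rho$ acting as rotation by $\pi$ on a chosen set of planes,'' with $\phi_j(\rho)=-1$ and $\phi_j$ trivial on the identity component. But a rotation by $\pi$ in a plane that carries an $SO(2)$ action \emph{is} in the connected torus, so if $G_j$ contains the full torus then $\rho$ lies in the identity component and any continuous $\phi_j:G_j\to\mathbb{Z}_2$ must send it to $+1$ — contradicting $\phi_j(\rho)=-1$. If instead you shrink the torus part to avoid this, $G_j$ becomes disconnected and you risk points $x$ with $Tx=\{x\}$ but $\rho x\neq x$, producing a discrete orbit of size $2$, which violates $(\mathbf{S1})$. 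This is exactly the tension that the paper resolves by a specific choice you do not identify: the group $\Gamma$ is generated on $\mathbb{C}^2$ by the diagonal circle $\mathrm{e}^{\mathrm{i}\theta}(\zeta_1,\zeta_2)=(\mathrm{e}^{\mathrm{i}\theta}\zeta_1,\mathrm{e}^{\mathrm{i}\theta}\zeta_2)$ together with $\varrho(\zeta_1,\zeta_2)=(-\bar\zeta_2,\bar\zeta_1)$. This $\varrho$ is \emph{not} in the diagonal circle (it is not a rotation by $\pi$; it has order $4$ with $\varrho^2=-\mathrm{id}$, which lies in $\ker\phi$), and crucially the $\Gamma$-orbit of any $z\neq 0$ is the union of the two circles $S^1z$ and $S^1\varrho z$, hence always positive-dimensional, so $(\mathbf{S1})$ holds with no finite orbits of size $>1$. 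Your sketch does not produce such an element, and ``rotation by $\pi$'' will not.

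Two secondary points. First, the combinatorial invariant you float for part (c) — the codimension of $(\mathbb{R}^N)^{G_j}$ — does not actually separate the groups: in the paper's construction $(\mathbb{R}^N)^{G_j}=\{0\}$ for every $j<\lfloor N/4\rfloor$ and only for $j=\lfloor N/4\rfloor$ is the fixed set larger, so this quantity is not strictly increasing. The working argument for (c) is the one you gesture at as the ``direct sign comparison'': take an element $g$ that applies $\varrho$ to the $j$-th $\mathbb{C}^2$ block. Viewed through $G_i$ with $i<j$, this $g$ sits inside the $O(N-4i)$ factor $\Lambda_i$, on which $\phi_i\equiv 1$, so $u(gx)=u(x)$; viewed through $G_j$ it is a $\varrho$ in the $j$-th $\Gamma$ factor, so $v(gx)=-v(x)$. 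Hence $u(x)=v(x)\neq 0$ is impossible. Second, for $(\mathbf{S2})$ one cannot just say ``pick $\xi$ whose stabilizer is a subtorus''; one needs, and the paper's construction supplies, a concrete $\xi$ whose $G_j$-stabilizer avoids the $\varrho$-coset in every $\Gamma$ factor. So the outline is compatible with the right proof, but the crucial data — the explicit $\varrho$ and the orbit structure it induces — is missing, and the description you gave would not satisfy the hypotheses.
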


\begin{proof}
Let $\Gamma$ be the group generated by $\{\mathrm{e}^{\mathrm{i}\theta},\varrho: \theta \in [0,2\pi) \}$, acting on $\mathbb{C}^2$ by
$$\mathrm{e}^{\mathrm{i}\theta}(\zeta_1,\zeta_2):=(\mathrm{e}^{\mathrm{i}\theta}\zeta_1,\mathrm{e}^{\mathrm{i}\theta}\zeta_2), \qquad \varrho(\zeta_1,\zeta_2):=(-\bar{\zeta_2},\bar{\zeta_1}), \qquad \text{for} \quad (\zeta_1,\zeta_2) \in \mathbb{C}^2,$$
and let $\phi:\Gamma \to \mathbb{Z}_2$ be the homomorphism given by $\phi(\mathrm{e}^{\mathrm{i}\theta}):=1$ and $\phi(\varrho):=-1$. Note that the $\Gamma$-orbit of a point $z \in \mathbb{C}^2$ is the union of two circles that lie in orthogonal planes if $z \neq 0$, and it is $\{0\}$ if $z=0$.

Set $n=\lfloor \frac{N}{4}\rfloor$, $\Lambda_j:=O(N-4j)$ if $j=1,...,n-1$, and $\Lambda_n:=\{1\}$. Then the $\Lambda_j$-orbit of a point $y \in \mathbb{R}^{N-4j}$ is an $(N-4j-1)$-dimensional sphere if $j=1,\ldots,n-1$, and it is a single point if $j=n$. 

Define $G_j:=\Gamma^j \times \Lambda_j$, acting on $\mathbb{R}^N \equiv (\mathbb{C}^2)^j \times \mathbb{R}^{N-4j}$ by
$$(\gamma_1,\ldots,\gamma_j,\eta)(z_1,\ldots,z_j,y):=(\gamma_1 z_1,\ldots,\gamma_j z_j,\eta y),$$
where $\gamma_i \in \Gamma$, $\eta \in \Lambda_j$, $z_i \in \mathbb{C}^2$, and $y \in \mathbb{R}^{N-4j}$, and let $\phi_j:G_j\rightarrow\mathbb{Z}_{2}$ be the homomorphism
$$\phi_j (\gamma_1,\ldots,\gamma_j,\eta):= \phi(\gamma_1) \cdots \phi(\gamma_j).$$
The $G_j$-orbit of $(z_1,\ldots,z_j,y)$ is the product of orbits 
$$G_j(z_1,\ldots,z_j,y) =  \Gamma z_1 \times \cdots \times \Gamma z_j \times \Lambda_j y.$$ 
Clearly, $\phi_j$ is surjective, and $G_j$ and $\phi_j$ satisfy $\mathbf{(S1)}$ and $\mathbf{(S2)}$ for each $j=1,\ldots,n$.

Now we prove (c). If $u$ is $\phi_i$-equivariant, $v$ is $\phi_j$-equivariant with $i<j$, and $u(x)=v(x) \neq 0$ for some $x=(z_1,\ldots,z_j,y) \in (\mathbb{C}^2)^j \times \mathbb{R}^{N-4j}$, then, as
$$u(z_1,\ldots,\varrho z_j,y) = u(z_1,\ldots,z_j,y) \quad \text{and} \quad v(z_1,\ldots,\varrho  z_j,y) = -v(z_1,\ldots,z_j,y),$$
we have that $u(z_1,\ldots,\varrho z_j,y) \neq v(z_1,\ldots,\varrho z_j,y)$. This proves that $u\neq v$.
\end{proof} 
\medskip

\begin{proof}[Proof of Theorem \ref{thm:main}]
Apply Theorem \ref{thm:existence} to each of the $\phi_j:G_j\to\mathbb{Z}_{2}$ given by Lemma \ref{lem:examples} to obtain pairwise distinct sign-changing solutions $u_1,\ldots,u_n$ to the problem \eqref{eq:prob}. Set $v_i:=-|\Delta u_i|^{q'-2}\Delta u_i$. Since $u_i$ is $\phi_i$-equivariant, $\Delta u_i$ is also $\phi_i$-equivariant and, by Lemma \ref{lem:equivalence}, $(u_i,v_i)$ is a sign-changing solution to the system \eqref{eq:system}.
\end{proof}
\medskip
\begin{remark} \label{rem:n3}
1) At first glance, the symmetries given by Lemma \ref{lem:examples} may seem a bit involved. To illustrate the general shape of a $\phi$-equivariant function we give an explicit example.  Let $f:[0,\infty)\to\R$ be any function and $u:\mathbb{C}^2\to\R$ be given by
\begin{align*}
  u(z_1,z_2)&= f(|z|) \, (|z_1|^2-|z_2|^2)
\end{align*}
for $z=(z_1,z_2)\in \mathbb{C}^2$. Clearly, $u(\mathrm{e}^{\mathrm{i}\theta}z_1,\mathrm{e}^{\mathrm{i}\theta}z_2)=u(z_1,z_2)$ and $u(-\bar{z_2},\bar{z_1})=-u(z_1,z_2)$, i.e., $u$ is $\phi$-equivariant. Note that $u$ is nonradial and changes sign.

2) Theorem \ref{thm:main} is not optimal since, as the proof of Lemma \ref{lem:examples} suggests, there can be other symmetries yielding different solutions.

3) Our approach cannot be used to obtain sign-changing solutions when $N=3$ because no closed subgroup $G$ of $O(3)$ satisfying $(\mathbf{S1})$ and $(\mathbf{S2})$ admits a surjective homomorphism $\phi:G\to\mathbb{Z}_2$, as can be verified by analyzing each subgroup of $O(3)$. The complete list of them is given in \emph{\cite[Section 8]{b}}.
\end{remark}

To close this section we analyze the lack of Möbius invariance of problem \eqref{eq:prob}. 
A \emph{Möbius transformation} $\tau:\mathbb{R}^{N}\cup\{\infty\}\to\mathbb{R}^{N}\cup\{\infty\}$ is a
finite composition of inversions on spheres and reflections on hyperplanes. Recall that the inversion on the sphere $S_{\varrho}(\xi):=\{x\in\mathbb{R}^{N}:|x-\xi|=\varrho\}$, $\xi\in\mathbb{R}^{N}$, $\varrho>0$, is the map $\iota_{\varrho,\xi}$ defined by
$$\iota_{\varrho,\xi}(x):=\xi+\frac{\varrho^2(x-\xi)}{|x-\xi|^2}\quad\text{if }x\neq\xi,\qquad\iota_{\varrho,\xi}(\xi):=\infty,\qquad\iota_{\varrho,\xi}(\infty):=\xi.$$
Since Euclidean isometries are compositions of reflections on hyperplanes, they are Möbius transformations. Dilations $x\mapsto\lambda x$, $\lambda>0$, are also Möbius transformations; see \cite{be}.

If $\tau$ is a Möbius transformation and $u:\mathbb{R}^N\to\mathbb{R}$, we define $u_{\tau}$ by
$$u_{\tau}(x):=|\det \tau'(x)|^{1/p}\,u(\tau(x)).$$
Then, the map $u\mapsto u_{\tau}$ is a linear isometry of $L^p(\mathbb{R}^N)$, i.e., $|u_\tau|_p=|u|_p$ for every $u\in L^p(\mathbb{R}^N)$. Next we investigate, for which values of $q$ this map is also an isometry of $D^{2,q'}(\mathbb{R}^N)$, i.e., $\|u_\tau\|_{q'}=\|u\|_{q'}$, as, for such values, the functional $J$ is Möbius-invariant. The answer is given by the following proposition.

\begin{proposition} \label{prop:no_kelvin}
Let $\iota$ be the inversion on the unit sphere $S_1(0)$. Then, $\|u_\iota\|_{q'}=\|u\|_{q'}$ for every $u\in D^{2,q'}(\mathbb{R}^N)$ if and only if $q\in \{2,2^*\}$.
\end{proposition}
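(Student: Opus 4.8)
The plan is to compute explicitly how the Laplacian transforms under the Kelvin transform (inversion on the unit sphere) and check when the resulting weighted $L^{q'}$-norm of $\Delta u$ equals the unweighted one. Recall that for $\iota(x) = x/|x|^2$ we have $|\det\iota'(x)| = |x|^{-2N}$, so $u_\iota(x) = |x|^{-2N/p}\,u(x/|x|^2)$. The classical conformal identity for the Laplacian under inversion reads
\begin{equation*}
\Delta\left(|x|^{2-N}\,v(x/|x|^2)\right) = |x|^{-N-2}\,(\Delta v)(x/|x|^2).
\end{equation*}
First I would rewrite $u_\iota$ in a form that lets me apply this identity: since on the critical hyperbola $\frac{1}{p}+\frac1q=\frac{N-2}{N}$ one has $\frac{N}{p} = N - 2 - \frac{N}{q} + \frac{N-2}{?}$—more usefully, the relevant exponents satisfy $\frac{2N}{p} = N+2-\frac{2N}{q'}$ (using $p = \frac{Nq'}{N-2q'}$), so I would write $u_\iota(x) = |x|^{2-N}\cdot|x|^{N-2-2N/p}\,u(x/|x|^2)$ and deal with the extra factor $|x|^{\alpha}$ with $\alpha := N-2-\tfrac{2N}{p}$. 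One computes $\Delta u_\iota$ by the product/chain rule; the point is that $\alpha=0$ precisely when $p=2^*$ (equivalently $q=2^*$), in which case the conformal identity gives $\Delta u_\iota(x) = |x|^{-N-2}(\Delta u)(x/|x|^2)$ and then the change of variables $y=x/|x|^2$, $dy = |x|^{-2N}dx$, shows $\int|\Delta u_\iota|^{q'} = \int |x|^{(-N-2)q'+2N}\,|(\Delta u)(x/|x|^2)|^{q'}dx = \int|\Delta u|^{q'}$ exactly when $(-N-2)q' = -2N$, i.e. $q' = \frac{2N}{N+2}$, i.e. $q = 2^*$ again—consistent.

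For the general case $\alpha\neq 0$, I would carry out the full computation of $\Delta u_\iota$. Writing $u_\iota = |x|^\alpha\,w$ with $w(x) = |x|^{2-N}u(x/|x|^2)$ (so $\Delta w(x) = |x|^{-N-2}(\Delta u)(x/|x|^2)$), the product rule gives
\begin{equation*}
\Delta u_\iota = |x|^\alpha \Delta w + 2\alpha|x|^{\alpha-2}\,x\cdot\nabla w + \alpha(\alpha+N-2)|x|^{\alpha-2}w.
\end{equation*}
So $\Delta u_\iota$ equals the "good" term $|x|^\alpha\Delta w$ plus lower-order correction terms that vanish identically only when $\alpha=0$ or $\alpha = -(N-2)$; the latter corresponds to $\frac{2N}{p} = 2N-4$, i.e. $p = \frac{N}{N-2}$, which is the degenerate endpoint of the hyperbola and excluded. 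To handle the case $q=2$ separately (here $q'=2$, the exponent is $2$ and the functional is quadratic): when $q'=2$, $\|u_\iota\|_2^2 = \int|\Delta u_\iota|^2$ and the cross terms, after integration by parts, combine into boundary-free total derivatives; more conceptually, $D^{2,2}(\R^N)$-norm invariance under the full conformal group is the content of the Paneitz operator's conformal covariance, so $q=2$ works. The cleanest way to show $q=2$ is admissible is to verify directly that the Paneitz-type bilinear form $\int\Delta u\,\Delta v$ is conformally invariant in dimension $N$ (a standard fact), or to plug $q'=2$ into the correction-term computation and check the extra pieces integrate to zero.

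The hard part, and the direction I expect to require the most care, is the \emph{only if} direction: showing that if $q\notin\{2,2^*\}$ then there actually exists some $u\in D^{2,q'}(\R^N)$ with $\|u_\iota\|_{q'}\neq\|u\|_{q'}$. Having the correction terms be nonzero pointwise is not by itself enough—one must rule out a miraculous cancellation inside the integral $\int|\Delta u_\iota|^{q'}$. Since for $q'\neq 2$ the integrand is not a perfect square and integration-by-parts tricks fail, I would argue by a scaling/test-function analysis: pick a one-parameter family of test functions $u_\lambda$ (e.g. a fixed bump composed with dilations, or a sharply concentrated bump near a generic point $x_0$ with $|x_0|\neq 1$) and compute the leading-order asymptotics of both $\|u_\lambda\|_{q'}$ and $\|(u_\lambda)_\iota\|_{q'}$; the inversion maps the concentration point $x_0$ to $x_0/|x_0|^2$ and introduces the weight $|x|^\alpha$ evaluated near that image point, which is $\approx|x_0|^{-2\alpha}\cdot(\text{const})\neq 1$ when $\alpha\neq 0$ and $|x_0|\neq 1$. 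Making the bump concentrate shows the norms differ. Alternatively—and this may be cleaner to write—one observes that $J$ being $\iota$-invariant would, via the argument structure of this paper (Theorem \ref{thm:existence} combined with known uniqueness results for the positive solution from \cite{hv,CLO05}), be inconsistent with the explicit decay rates of the Lions ground state derived in \cite[Theorem 2]{hv}, which are \emph{not} those of a conformally-covariant bubble unless $q\in\{2,2^*\}$. I would most likely present the direct test-function argument as the primary proof and relegate the conceptual remark to a sentence, since the test-function computation is self-contained and does not invoke the delicate decay estimates.
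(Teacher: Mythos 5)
Your outline for the ``if'' direction is sound, and your Kelvin-identity decomposition $\Delta u_\iota=|x|^\alpha\Delta w + 2\alpha|x|^{\alpha-2}x\cdot\nabla w + \alpha(\alpha+N-2)|x|^{\alpha-2}w$ (with $w(x)=|x|^{2-N}u(x/|x|^2)$ and $\Delta w(x)=|x|^{-N-2}(\Delta u)(x/|x|^2)$) is a legitimate, arguably cleaner, way to verify invariance when $q\in\{2,2^*\}$. The paper instead works directly with the Euler--Lagrange operator $L u:=\Delta(|\Delta u|^{q'-2}\Delta u)$; both routes are fine for the easy direction.

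However, your proposed test-function argument for the ``only if'' direction has a genuine gap. When you concentrate a bump at $x_0$ at scale $1/\lambda\to0$, the inversion $\iota$ is, to leading order, an affine map: a dilation by $1/|x_0|^2$ composed with a translation and orthogonal map. Since $\|\cdot\|_{q'}$ on $D^{2,q'}(\mathbb{R}^N)$ \emph{is} invariant under the rescaling $u\mapsto \delta^{N/p}u(\delta\,\cdot)$ (this uses exactly the critical-hyperbola relation $(N/p+2)q'=N$), the ratio $\|(u_\lambda)_\iota\|_{q'}/\|u_\lambda\|_{q'}$ tends to $1$ as $\lambda\to\infty$ for \emph{every} $(p,q)$ on the hyperbola, not just $q\in\{2,2^*\}$. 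Your claim that the weight $|x|^\alpha$ near the image point yields a factor $|x_0|^{-2\alpha}\neq1$ overlooks that the Jacobian $|\det\iota'|$ and the factor $|x|^{-N-2}$ from $\Delta w$ exactly cancel this contribution at leading order. So the concentration argument shows nothing, and the entire burden falls on the subleading correction terms, which for $q'\neq 2$ you cannot expand cleanly. Your fallback remark (incompatibility with the Hulshof--van der Vorst decay rates) is a plausible heuristic but is not a proof.

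The paper avoids this pitfall by a different idea: since $u\mapsto u_\iota$ is linear, differentiating the putative identity $\|u_\iota\|_{q'}^{q'}=\|u\|_{q'}^{q'}$ yields the bilinear identity $\int|\Delta u|^{q'-2}\Delta u\,\Delta v=\int|\Delta u_\iota|^{q'-2}\Delta u_\iota\,\Delta v_\iota$. Plugging in a $u$ that equals $1$ on an annulus around $|x|=1$ makes $Lu\equiv 0$ there, while $u_\iota(x)=|x|^{-2N/p}$ on the same annulus and $L[u_\iota]=C_{N,p,q}|x|^{(q'-1)(\alpha-2)-2}$ with an explicitly computable constant; testing against $\varphi_\iota$ supported in the annulus forces $C_{N,p,q}=0$, whose two roots are precisely $q=2^*$ and $q=2$. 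You should replace your concentration argument with this differentiate-and-test computation (or an equivalent exact identity); without it, the ``only if'' direction is unproved.
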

\begin{proof}
As $\iota(x)=\frac{x}{|x|^2}$, the map $u\mapsto u_{\iota}$ is the Kelvin-type transform given by
$$u_\iota(x)=|x|^{-2N/p}\,u\left(\frac{x}{|x|^2}\right).$$
Assume that $\|u_\iota\|_{q'}=\|u\|_{q'}$ for every $u\in D^{2,q'}(\mathbb{R}^N)$. Since $u\mapsto u_{\iota}$ is a continuous linear map, differentiating the identity $\|u_\iota\|_{q'}^{q'}=\|u\|_{q'}^{q'}$ and applying the chain rule we obtain
\begin{align*}
\int_{\mathbb{R}^N}|\Delta u|^{q'-2}\Delta u\Delta v=\int_{\mathbb{R}^N}|\Delta u_\iota|^{q'-2}\Delta u_\iota \Delta v_\iota\qquad\text{for every }u,v\in D^{2,q'}(\mathbb{R}^N).
\end{align*}
Set $A_{a,b}:=\{x\in\mathbb{R}^N:a<|x|<b\}$ and $\alpha:=-\frac{2N}{p}$. Let $u\in\mathcal{C}_c^\infty(\mathbb{R}^N)$ be such that $u(x)=1$ if $x\in A_{1/2,2}$, and define $Lu:=\Delta(|\Delta u|^{q'-2}\Delta u)$. Then $u_\iota(x)=|x|^\alpha$ and $L[u_\iota](x)=C_{N,p,q}|x|^{(q'-1)(\alpha-2)-2}$ for every $x\in A_{1/2,2}$, where
$$C_{N,p,q}=|\alpha(N-2+\alpha)|^{q'-2}(q'-1)(\alpha-2)\left[(q'-1)(\alpha-2)+N-2\right].$$
Therefore,
\begin{align*}
0&=\int_{\mathbb{R}^N}(Lu)\varphi=\int_{\mathbb{R}^N}(L[u_\iota])\varphi_\iota=C_{N,p,q}\int_{\mathbb{R}^N}|x|^{(q'-1)(\alpha-2)-2}\varphi_\iota
\end{align*}
for every $\varphi\in \mathcal{C}_c^\infty(A_{1/2,1})$.  This implies that $C_{N,p,q}=0$. Hence, either $\frac{2N}{p}=N-2$, or $(q'-1)(\frac{2N}{p}+2)=N-2$. Recall that $\frac{N}{p}+\frac{N}{q}=N-2$ and $\frac{N}{q}+\frac{N}{q'}=N$. Thus, $\frac{2N}{p}=N-2$ iff $2^*=p=q$, and $(q'-1)(\frac{2N}{p}+2)=N-2$ iff $q=2$, as claimed.

The opposite statement is the Kelvin-invariance for the Yamabe equation \eqref{eq:yamabe} and the Paneitz equation \eqref{eq:paneitz}, which is well known; see \cite{bsw,d}.
\end{proof}

\appendix

\section{The weak limits are solutions}

In \cite{mw, cl} a truncation is used to show that the weak limits $u$ and $w$ in the proof of Theorem \ref{thm:profile} are solutions of a limit problem. Truncations are commonly used in the study of the $p$-Laplacian, but they do not work well in the higher-order setting because gradient discontinuities prevent the truncated function from being twice weakly differentiable. Here we give a different argument, that can also be applied to more general problems, like those described in the introduction.

Let $\Theta$ be a $G$-invariant smooth domain in $\mathbb{R}^N$, not necessarily bounded, and let $C^\infty_c(\Theta)^\phi$ and $C^\infty_c(\Theta)^G$ denote the spaces of functions in $C^\infty_c(\Theta)$ which are $\phi$-equivariant and $G$-invariant respectively. The main result in this appendix is the following one.

\begin{proposition} \label{prop:A}
Let $v_k,v\in D^{2,q'}(\R^N)^\phi$ be such that $v_k\rightharpoonup v$ weakly in $D^{2,q'}(\R^N)$.  Assume that
\begin{equation*}
  \lim_{k\to\infty}  J'(v_k)\varphi = 0\qquad  \text{ and }\qquad 
  \lim_{k\to\infty}J'(v_k)[\psi (v_k-v)]=0
\end{equation*}
for every $\varphi\in C^\infty_c(\Theta)^\phi$ and $\psi\in C^\infty_c(\Theta)^G$.  Then $J'(v)\varphi=0$ for all $\varphi\in C^\infty_c(\Theta)^\phi$.
\end{proposition}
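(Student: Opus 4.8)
The plan is to show that $|\Delta v_k|^{q'-2}\Delta v_k$ converges weakly to $|\Delta v|^{q'-2}\Delta v$ in $L^q_{\mathrm{loc}}$, which immediately gives $J'(v)\varphi=0$ by passing to the limit in $J'(v_k)\varphi=o(1)$ (the lower-order term $\int |v_k|^{p-2}v_k\varphi \to \int |v|^{p-2}v\varphi$ follows from Rellich--Kondrachov on the compact set $\supp\varphi$, using boundedness in $D^{2,q'}$ and a.e.\ convergence after a subsequence). The standard device for such nonlinear weak continuity is a Minty--Browder type monotonicity argument: the map $\xi\mapsto|\xi|^{q'-2}\xi$ is strictly monotone, so if one can show
$$\limsup_{k\to\infty}\int_{\Theta}\psi\left(|\Delta v_k|^{q'-2}\Delta v_k-|\Delta v|^{q'-2}\Delta v\right)(\Delta v_k-\Delta v)\le 0$$
for a suitable family of nonnegative cutoffs $\psi\in C^\infty_c(\Theta)^G$ exhausting $\Theta$, then $\Delta v_k\to\Delta v$ in measure locally, hence (after a subsequence) a.e., and then the nonlinear term converges by Vitali's convergence theorem together with the equi-integrability coming from boundedness of $(|\Delta v_k|^{q'-2}\Delta v_k)$ in $L^q_{\mathrm{loc}}$.

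The work is therefore to extract that $\limsup$ inequality from the two hypotheses, and the delicate point — the place where the higher-order nature bites, replacing the truncation argument of \cite{mw,cl} — is handling the term $J'(v_k)[\psi(v_k-v)]$, because $\Delta(\psi(v_k-v))$ produces not only $\psi\,\Delta(v_k-v)$ but also the cross terms $2\nabla\psi\cdot\nabla(v_k-v)$ and $(\Delta\psi)(v_k-v)$. The cross terms are the reason $\psi$ is required to be $G$-invariant (so that $\psi(v_k-v)$ is $\phi$-equivariant and the hypothesis applies) and, more importantly, the reason one needs $|\nabla v_k|\to|\nabla v|$ in $L^{q'}_{\mathrm{loc}}$ and $v_k\to v$ in $L^{q'}_{\mathrm{loc}}$: these follow from Rellich--Kondrachov, and they force $\int \psi'\text{-type terms}\to 0$ once one checks the Hölder exponents. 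Concretely I would write, with $\varphi:=\psi(v_k-v)$ in the hypothesis,
$$\int_\Theta \psi\,|\Delta v_k|^{q'-2}\Delta v_k\,\Delta(v_k-v) = J'(v_k)[\psi(v_k-v)] - \int_\Theta |\Delta v_k|^{q'-2}\Delta v_k\big(2\nabla\psi\cdot\nabla(v_k-v)+(\Delta\psi)(v_k-v)\big) + \int_\Theta |v_k|^{p-2}v_k\,\psi(v_k-v),$$
and argue that the right-hand side is $o(1)$: the first term by assumption; the middle term by Hölder, using that $(|\Delta v_k|^{q'-2}\Delta v_k)$ is bounded in $L^q_{\mathrm{loc}}$ while $\nabla(v_k-v)\to 0$ and $v_k-v\to 0$ in $L^{q'}_{\mathrm{loc}}$ on $\supp\psi$; the last term likewise since $\psi(v_k-v)\to 0$ in $L^p_{\mathrm{loc}}$ and $(|v_k|^{p-2}v_k)$ is bounded in $L^{p'}_{\mathrm{loc}}$. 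Meanwhile $\int_\Theta \psi\,|\Delta v|^{q'-2}\Delta v\,\Delta(v_k-v)\to 0$ by weak convergence $\Delta v_k\rightharpoonup\Delta v$ in $L^{q'}$, since $\psi\,|\Delta v|^{q'-2}\Delta v\in L^q$. Subtracting, the $\limsup$ inequality above holds (in fact with limit $0$), and the Minty--Browder conclusion applies on each ball $B\Subset\Theta$ by choosing $\psi\equiv 1$ on $B$.

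The main obstacle I anticipate is purely a matter of bookkeeping the integrability: one must confirm that $(|\Delta v_k|^{q'-2}\Delta v_k)$ is bounded in $L^q$ (clear, since $(q'-1)q=q'$ and $(v_k)$ is bounded in $D^{2,q'}$), that $(|v_k|^{p-2}v_k)$ is bounded in $L^{p'}$, and that the pairings $\nabla\psi\cdot\nabla(v_k-v)$ and $(\Delta\psi)(v_k-v)$ pair correctly against $L^q$ — here one needs the local strong convergence $\nabla v_k\to\nabla v$ and $v_k\to v$ in $L^{q'}_{\mathrm{loc}}$, which is exactly what the compact embedding $D^{2,q'}(\mathbb R^N)\hookrightarrow W^{1,q'}_{\mathrm{loc}}(\mathbb R^N)$ supplies. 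A secondary subtlety is that the conclusion is asked only for $\varphi\in C^\infty_c(\Theta)^\phi$, not all of $C^\infty_c(\Theta)$; but since the monotonicity argument gives $\Delta v_k\to\Delta v$ a.e.\ on all of $\Theta$ (the cutoffs $\psi$ can be taken $G$-invariant and exhausting, and $G$-invariant exhaustions exist because $\Theta$ is $G$-invariant), one recovers $J'(v)\varphi=0$ directly for every $\phi$-equivariant test function by the limiting argument of the first paragraph, with no further symmetrization needed.
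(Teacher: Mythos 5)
The structure of your argument is essentially the same as the paper's: you use the monotonicity of $t\mapsto|t|^{q'-2}t$ to upgrade $\int_\Theta\psi\,(|\Delta v_k|^{q'-2}\Delta v_k - |\Delta v|^{q'-2}\Delta v)\Delta(v_k-v)\to 0$ to a.e.\ convergence $\Delta v_k\to\Delta v$, the decomposition of $\Delta(\psi(v_k-v))$ into the main term plus the two cross terms is exactly the paper's \eqref{eq:1}, the Rellich--Kondrachov treatment of the cross terms matches \eqref{eq:3}--\eqref{eq:4}, and you correctly identify why $\psi$ must be $G$-invariant. The final passage from a.e.\ convergence of $\Delta v_k$ to $J'(v)\varphi=0$ via Vitali/equi-integrability is a minor variant of the paper's Egorov step and is fine.

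However, there is a genuine gap precisely at the step the appendix was written to resolve. You dispose of the term $\int_\Theta |v_k|^{p-2}v_k\,\psi(v_k-v)$ by asserting ``$\psi(v_k-v)\to 0$ in $L^p_{\mathrm{loc}}$.'' This is false in general: $p$ is the \emph{critical} exponent for the embedding $D^{2,q'}(\mathbb{R}^N)\hookrightarrow L^p(\mathbb{R}^N)$, and Rellich--Kondrachov gives local strong convergence only in $L^r$ for $r<p$, not for $r=p$. Indeed, if $v_k\rightharpoonup v$ in $D^{2,q'}$ implied $v_k\to v$ in $L^p_{\mathrm{loc}}$, most of the concentration analysis in the rest of the paper (Theorem \ref{thm:profile}) would be unnecessary; the whole point is that $v_k-v$ can carry a fixed amount of $L^p$-mass concentrating at a point. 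Equivalently, $\int h_k\,\psi(v_k-v)$ is a pairing of two merely weakly convergent sequences (in $L^{p'}$ and $L^p$ respectively), so you cannot pass to the limit by H\"older alone.

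The paper circumvents this with a mollification argument: replace $v_k-v$ by $\eta_\varrho*(v_k-v)$. For fixed $\varrho>0$, the mollified sequence is bounded in $L^\infty_{\mathrm{loc}}$ and converges pointwise to $0$ (using subcritical Rellich--Kondrachov $v_k\to v$ in $L^r_{\mathrm{loc}}$, $r<p$), so $\int_K|\eta_\varrho*(v_k-v)|^p\to 0$ by dominated convergence; meanwhile the error $\int h_k\,\psi\,[(v_k-v)-\eta_\varrho*(v_k-v)]$ is made smaller than an arbitrary $\gamma>0$ uniformly in $k$ by choosing $\varrho$ small, exploiting the $D^{2,q'}$-boundedness of $(v_k-v)$. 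This mollification step---which plays the role that truncation plays for the $p$-Laplacian in \cite{mw,cl}, but works in the higher-order setting---is the missing ingredient in your argument. Without it, the claimed limit $\int h_k\,\psi(v_k-v)\to 0$ is unjustified and the monotonicity argument does not get off the ground.
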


We start with the following lemmas. For a set $U\subset \R^N$, we use $|U|$ to denote its Lebesgue measure.

\begin{lemma}\label{lem:quotient}
  Let $U\subset\R^N$ be a measurable bounded set with $|U|>0$, let $(f_k)$ be a sequence of nonnegative functions which is bounded in $L^1(U)$, and let $\alpha>0$. Then, there exists $\kappa>0$ such that
  \begin{align*}
    \int_U \frac{1}{(f_k+1)^\alpha}>\kappa\qquad \text{for all }\;k\in\N.
  \end{align*}
\end{lemma}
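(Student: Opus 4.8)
The plan is to exploit the convexity of the map $t \mapsto t^{-\alpha}$ on $(0,\infty)$ together with Jensen's inequality, using the $L^1$ bound on $(f_k)$ to control the relevant average. Concretely, since $f_k \geq 0$, the function $g_k := f_k + 1$ satisfies $g_k \geq 1$, so $\frac{1}{\mu(G)}$—no, rather, normalize by $|U|$: let $d\nu := \frac{1}{|U|}\,dx$ be the normalized probability measure on $U$. By Jensen's inequality applied to the convex function $\Phi(t) = t^{-\alpha}$ (convex on $(0,\infty)$ since $\Phi''(t) = \alpha(\alpha+1)t^{-\alpha-2} > 0$),
\begin{equation*}
\int_U \Phi(g_k)\,d\nu \;\geq\; \Phi\!\left(\int_U g_k\,d\nu\right),
\end{equation*}
that is, $\frac{1}{|U|}\int_U (f_k+1)^{-\alpha} \geq \left(\frac{1}{|U|}\int_U (f_k+1)\right)^{-\alpha}$.

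Next I would use the hypothesis that $(f_k)$ is bounded in $L^1(U)$: say $\int_U f_k \leq M$ for all $k$. Then $\int_U (f_k+1) \leq M + |U|$, so the right-hand side above is bounded below by $\left(\frac{M+|U|}{|U|}\right)^{-\alpha} =: \tilde\kappa > 0$, a constant independent of $k$. Multiplying back through by $|U|$ yields
\begin{equation*}
\int_U \frac{1}{(f_k+1)^\alpha} \;\geq\; |U|\,\tilde\kappa \;=\; |U|\left(\frac{M+|U|}{|U|}\right)^{-\alpha} \;=:\; \kappa \;>\; 0,
\end{equation*}
which is the desired bound. Setting $\kappa := |U|^{1+\alpha}(M+|U|)^{-\alpha}$ and noting $\kappa>0$ because $|U|>0$ completes the argument. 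One could equally phrase this as a one-line application of the reverse-Jensen / power-mean inequality, or avoid Jensen entirely via Hölder: writing $|U| = \int_U (f_k+1)^{\alpha/(1+\alpha)} (f_k+1)^{-\alpha/(1+\alpha)} \leq \left(\int_U (f_k+1)\right)^{\alpha/(1+\alpha)} \left(\int_U (f_k+1)^{-\alpha}\right)^{1/(1+\alpha)}$ and rearranging gives the same conclusion.

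I do not expect any genuine obstacle here; the statement is an elementary measure-theoretic fact and the only point requiring a little care is making sure the constant $\kappa$ depends only on $|U|$, the $L^1$ bound $M$, and $\alpha$, and not on $k$ — which the Jensen (or Hölder) argument makes transparent since the $L^1$ bound enters uniformly. The lemma is presumably invoked later to prevent a sequence of denominators of the form $(|\nabla w_k| + 1)^\alpha$ or $(|w_k|+1)^\alpha$ from degenerating, ensuring that certain integral quantities stay bounded away from zero along the minimizing sequence.
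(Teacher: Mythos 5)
Your proof is correct. The Jensen argument (or equivalently the Hölder variant you sketch) is a clean, valid route to the bound: $\Phi(t)=t^{-\alpha}$ is convex on $(0,\infty)$, the hypothesis gives a uniform upper bound on the average of $f_k+1$, and convexity converts that into a uniform lower bound on the average of $(f_k+1)^{-\alpha}$, yielding the explicit constant $\kappa = |U|^{1+\alpha}(M+|U|)^{-\alpha}$ depending only on $|U|$, $M$, and $\alpha$ as required.

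The paper's proof is genuinely different in method: it is a Chebyshev/Markov-type argument. The authors pick $n$ large enough that $(n-1)|U| > C$ (with $C$ the $L^1$ bound), observe via Markov's inequality that the superlevel set $U_k:=\{f_k \geq n-1\}$ has measure at most $C/(n-1)$, and then bound the integral from below by restricting to $U\smallsetminus U_k$, where the integrand is at least $n^{-\alpha}$; this gives $\int_U (f_k+1)^{-\alpha} \geq n^{-\alpha}\bigl(|U|-C/(n-1)\bigr)>0$. Both arguments exploit the $L^1$ bound uniformly in $k$ and both are elementary, but they operate at different levels: yours uses convexity globally on the whole domain, theirs uses a measure-theoretic splitting into a small "bad" set and a large "good" set. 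Your version gives a somewhat cleaner closed-form constant; the paper's version has the advantage of being transparent about \emph{why} the integral cannot degenerate (the set where $f_k$ is large is small), which is the qualitative phenomenon being invoked later in Lemma~\ref{lem:pointwise_convergence}. Either proof would serve the paper's purposes.

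One cosmetic remark: the lemma asserts a strict inequality $>\kappa$; your Jensen bound gives $\geq \kappa_0$ with $\kappa_0 := |U|^{1+\alpha}(M+|U|)^{-\alpha}$, so you should finish by taking any $\kappa\in(0,\kappa_0)$ (or simply observe that $\geq\kappa_0>0$ already gives what is used downstream).
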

\begin{proof}
Let $C\geq \int_U f_k$ for all $k\in\N$. Fix $n\in\N$ such that $(n-1)|U|>C$ and, for each $k\in\N$, set 
$$ U_k:=\{x\in U:f_k(x)\geq n-1\}=\left\{x\in U:\frac{1}{f_k(x)+1}\leq \frac{1}{n}\right\}.$$
Since $f_k\geq 0$, we have that
  \begin{align*}
C\geq \int_U f_k \geq \int_{U_k} f_k\geq (n-1)|U_k| \qquad\text{for all }k\in\N.
  \end{align*}
Then $|U_k|\leq \frac{C}{n-1}$ and therefore,
  \begin{align*}
\int_U \frac{1}{(f_k+1)^\alpha}
&=\int_{U_k} \frac{1}{(f_k+1)^\alpha}+\int_{U\smallsetminus U_k} \frac{1}{(f_k+1)^\alpha}\\
&\geq \frac{1}{n^\alpha} |U\smallsetminus U_k|\geq \frac{1}{n^\alpha}\left(|U|-\frac{C}{n-1}\right)>0,
  \end{align*}
as claimed.
\end{proof}

\begin{lemma}\label{lem:pointwise_convergence}
Let $v_k,v\in D^{2,q'}(\R^N)^\phi$ be such that $v_k\rightharpoonup v$ weakly in $D^{2,q'}(\R^N)$.  Assume that
  \begin{align}\label{A:0:eq}
\lim_{k\to\infty}\int_{\Theta} \psi |\Delta v_k|^{q'-2}\Delta v_k \Delta(v_k-v)=0
\qquad \text{for every $\psi\in C^\infty_c(\Theta)^G.$}
  \end{align}
Then, after passing to a subsequence, $\Delta v_k\to\Delta v$ a.e. in $\Theta$.
\end{lemma}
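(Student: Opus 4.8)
The plan is to exploit the strict monotonicity of the operator $\xi\mapsto|\xi|^{q'-2}\xi$ together with a localization argument to show that the ``energy defect'' $\int_K|\Delta v_k-\Delta v|^{q'}$ tends to zero on every compact $K\subset\Theta$, which (after passing to a subsequence) forces $\Delta v_k\to\Delta v$ a.e. The central algebraic fact is the elementary inequality: for some $c_{q'}>0$,
\begin{equation*}
\bigl(|\xi|^{q'-2}\xi-|\eta|^{q'-2}\eta\bigr)\cdot(\xi-\eta)\;\ge\;
\begin{cases}
c_{q'}\,|\xi-\eta|^{q'} & \text{if }q'\ge 2,\\[2pt]
c_{q'}\,\dfrac{|\xi-\eta|^{2}}{(|\xi|+|\eta|)^{2-q'}} & \text{if }1<q'<2,
\end{cases}
\end{equation*}
valid for all $\xi,\eta\in\R$. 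Write $e_k:=\bigl(|\Delta v_k|^{q'-2}\Delta v_k-|\Delta v|^{q'-2}\Delta v\bigr)(\Delta v_k-\Delta v)\ge 0$. I first claim that for every $\psi\in C^\infty_c(\Theta)^G$ with $0\le\psi\le 1$ one has $\int_\Theta\psi\,e_k\to 0$. Indeed, expanding,
\begin{equation*}
\int_\Theta\psi\,e_k=\int_\Theta\psi\,|\Delta v_k|^{q'-2}\Delta v_k\,\Delta(v_k-v)\;-\;\int_\Theta\psi\,|\Delta v|^{q'-2}\Delta v\,\Delta(v_k-v).
\end{equation*}
The first term is $o(1)$ by hypothesis \eqref{A:0:eq}. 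For the second term, $|\Delta v|^{q'-2}\Delta v\in L^{q}(\R^N)$, hence $\psi\,|\Delta v|^{q'-2}\Delta v\in L^{q}(\Theta)$, and since $\Delta(v_k-v)\rightharpoonup 0$ weakly in $L^{q'}(\R^N)$ (because $v_k\rightharpoonup v$ in $D^{2,q'}$), this term also tends to $0$. Thus $\int_\Theta\psi\,e_k\to 0$.

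Next I would convert this into a.e.\ convergence of $\Delta v_k$. Here one must be slightly careful when $1<q'<2$, because $e_k$ then controls only $|\Delta v_k-\Delta v|^{2}(|\Delta v_k|+|\Delta v|)^{q'-2}$, not a clean power of $|\Delta v_k-\Delta v|$. Fix a relatively compact open set $U\Subset\Theta$ and a cutoff $\psi\in C^\infty_c(\Theta)^G$ with $\psi\equiv 1$ on $U$ (averaging an arbitrary cutoff over $G$ keeps it $G$-invariant since $U,\Theta$ are $G$-invariant can be arranged, or one simply takes $U$ $G$-invariant). On $U$ we then have $\int_U e_k\to 0$, so $e_k\to 0$ in $L^1(U)$ and, along a subsequence, $e_k\to 0$ a.e.\ in $U$. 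Now apply the monotonicity inequality pointwise: at a.e.\ $x\in U$, if $q'\ge 2$ then $|\Delta v_k(x)-\Delta v(x)|^{q'}\le c_{q'}^{-1}e_k(x)\to 0$; if $1<q'<2$, writing $f_k:=|\Delta v_k|+|\Delta v|$, Hölder's inequality with exponents $2/q'$ and $2/(2-q')$ gives, on any measurable $E\subset U$,
\begin{equation*}
\int_E|\Delta v_k-\Delta v|^{q'}\le\Bigl(\int_E e_k\Bigr)^{q'/2}\Bigl(\int_E f_k^{q'}\Bigr)^{(2-q')/2},
\end{equation*}
and since $(f_k)$ is bounded in $L^{q'}(U)$, the right side is $o(1)$; hence $\Delta v_k\to\Delta v$ in $L^{q'}(U)$ and, along a further subsequence, a.e.\ in $U$. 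Exhausting $\Theta$ by countably many such $U$ and diagonalizing the subsequences yields $\Delta v_k\to\Delta v$ a.e.\ in $\Theta$.

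The main obstacle I anticipate is the $1<q'<2$ case: the monotonicity estimate degenerates where $|\Delta v_k|+|\Delta v|$ is large, so one cannot deduce pointwise convergence of $\Delta v_k$ directly from $e_k\to 0$ a.e. The remedy above — combining $e_k\to 0$ in $L^1_{\mathrm{loc}}$ with the uniform $L^{q'}$ bound via Hölder — handles this cleanly and is the reason Lemma \ref{lem:quotient}-type control on $(f_k+1)^{-\alpha}$ is invoked in the paper's surrounding argument. A secondary technical point is ensuring the localizing cutoff $\psi$ can be taken $G$-invariant: this is harmless because one may average any cutoff over the compact group $G$ (using the Haar measure, as in \eqref{phi:notation} with $\phi\equiv 1$) without losing the property $\psi\equiv 1$ on a prescribed $G$-invariant compact set. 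Everything else is routine.
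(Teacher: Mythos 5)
Your proof is correct and follows the same overall strategy as the paper: pass the hypothesis \eqref{A:0:eq} through the decomposition
\[
\int_\Theta \psi\, e_k \;=\; \int_\Theta \psi\,|\Delta v_k|^{q'-2}\Delta v_k\,\Delta(v_k-v)\;-\;\int_\Theta \psi\,|\Delta v|^{q'-2}\Delta v\,\Delta(v_k-v),
\]
kill the first term by hypothesis and the second by the weak convergence $\Delta(v_k-v)\rightharpoonup 0$ in $L^{q'}$ paired against the fixed $L^q$ function $\psi\,|\Delta v|^{q'-2}\Delta v$, and then extract pointwise convergence from $\int\psi\,e_k\to 0$ via a strict-monotonicity inequality for $s\mapsto |s|^{q'-2}s$. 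Where you diverge from the paper is in the delicate case $1<q'<2$. The paper uses the variant of the inequality with denominator $(|s|^{q'}+|t|^{q'}+1)^{2-q'}$ (the ``$+1$'' is what makes Lemma~\ref{lem:quotient} applicable), assumes for contradiction that $|\Delta(v_k-v)|$ stays bounded away from zero on a fixed compact set of positive measure, and derives that $\int_K(|\Delta v_k|^{q'}+|\Delta v|^{q'}+1)^{-(2-q')}\to 0$, contradicting Lemma~\ref{lem:quotient}. You instead use the homogeneous form of the inequality (denominator $(|s|+|t|)^{2-q'}$) and apply H\"older with exponents $2/q'$ and $2/(2-q')$ to the product
\[
|\Delta v_k-\Delta v|^{q'} \;=\; \Bigl(\tfrac{|\Delta v_k-\Delta v|^{2}}{f_k^{\,2-q'}}\Bigr)^{q'/2}\bigl(f_k^{\,q'}\bigr)^{(2-q')/2},
\]
with $f_k=|\Delta v_k|+|\Delta v|$, obtaining $\Delta v_k\to\Delta v$ in $L^{q'}_{\mathrm{loc}}(\Theta)$ directly; a diagonal subsequence then converges a.e.\ on $\Theta$. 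Your route is arguably cleaner: it yields the stronger local $L^{q'}$ convergence, it avoids the contradiction setup (whose negation of ``a.e.\ convergence along a subsequence'' requires some care to formalize), and it makes Lemma~\ref{lem:quotient} unnecessary for this step. The observation that a cutoff $\psi\equiv 1$ on a prescribed $G$-invariant compact set can be made $G$-invariant by Haar averaging is correct and matches what the paper implicitly uses.
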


\begin{proof}
As shown in \cite{li2}, there is a constant $C_0>0$, which depends only on $q'$, such that, for every $s,t\in\mathbb{R}$,
\begin{equation}\label{A:1:eq}
 (|s|^{q'-2}s - |t|^{q'-2}t)(s-t)\geq
  \begin{cases}
    C_0|s-t|^{q'} & \text{ if } q'\geq 2,\smallskip \\
    C_0\frac{|s-t|^2}{(|s|^{q'}+|t|^{q'}+1)^{2-q'}} & \text{ if } 1<q'<2.
  \end{cases}
\end{equation}
Let $v_k$ and $v$ as in the statement and set $f_k:=|\Delta v_k|^{q'-2}\Delta v_k-|\Delta v|^{q'-2}\Delta v$ and 
\begin{equation*} 
h_k:=
  \begin{cases}
    C_0|\Delta v_k-\Delta v|^{q'} & \text{ if } q'\geq 2,\smallskip \\	
    C_0\frac{|\Delta v_k-\Delta v|^2}{(|\Delta v_k|^{q'}+|\Delta v|^{q'}+1)^{2-q'}} & \text{ if } 1<q'<2.
  \end{cases}
\end{equation*}
From \eqref{A:1:eq}, \eqref{A:0:eq}, and the fact that $v_k\rightharpoonup v$ weakly in $D^{2,q'}(\R^N)$ we get that
\begin{equation} \label{eq:null}
0\leq \lim_{k\to\infty}\int_{\Theta}h_k\psi\leq
\lim_{k\to\infty}\int_{\Theta}f_k\psi=0
\end{equation}
for every nonnegative $\psi\in C^\infty_c(\Theta)^G.$  If $q'\geq 2$, this immediately implies that $\Delta v_k\to\Delta v$ a.e. in $\Theta$. If $1<q'<2$, we argue by contradiction. Assume that, after passing to a subsequence, there is a compact set $K\subset\Theta$ with positive measure, and a constant $\mu>0$, such that 
  \begin{align*}
|\Delta (v_k-v)(x)|>\mu\qquad \text{for all }\;x\in K,\;k\in\N.     
  \end{align*}
Fix $\psi\in C^\infty_c(\Theta)^G$ nonnegative with $\psi(x)=1$ for every $x\in K$. Then, \eqref{eq:null} implies that
$$\lim_{k\to\infty}\int_K\frac{1}{(|\Delta v_k|^{q'}+|\Delta v|^{q'}+1)^{2-q'}}=0,$$
contradicting Lemma \ref{lem:quotient}. The proof is complete.
\end{proof}
\medskip

\begin{proof}[Proof of Proposition \ref{prop:A}]
We show first that, after passing to a subsequence, $\Delta v_k\to \Delta v$ a.e. in $\Theta$. Let $\psi\in C^\infty_c(\Theta)^G$. To simplify notation, we write
$$f_k:=|\Delta v_k|^{q'-2}\Delta v_k\qquad\text{and}\qquad 
h_k:= |v_k|^{p-2}v_k.$$
Note that $(f_k)$ is bounded in $L^q(\mathbb{R}^N)$, $h_k$ is bounded in $L^{p'}(\mathbb{R}^N)$, and 
\begin{align} \label{eq:1}
\left|\int_{\mathbb{R}^N}f_k\psi \Delta(v_k-v)\right| \leq &
\left|\int_{\mathbb{R}^N} f_k\Delta(\psi(v_k-v))\right|
+\left|\int_{\mathbb{R}^N} f_k(\Delta\psi) (v_k-v)\right|\nonumber\\
&+2\left|\int_{\mathbb{R}^N} f_k\nabla \psi\cdot \nabla (v_k-v)\right|.
%&= \left|\int_{\mathbb{R}^N} g_k\psi(v_k-v)\right| + o(1)
%+\left|\int_{\mathbb{R}^N} f_k(\Delta\psi) (v_k-v)\right| \nonumber  \\
%&\qquad +2\left|\int_{\mathbb{R}^N} f_k\nabla \psi\cdot \nabla (v_k-v)\right|.
\end{align}
Let $\gamma>0$. We choose mollifiers $\eta_\varrho\in C^\infty_c(\mathbb{R}^N)$ with 
$\eta_\varrho\geq 0$, $\mathrm{supp}(\eta_\varrho)\subset \overline{B_\varrho(0)}$ and $\int_{\mathbb{R}^N}\eta_\varrho=1$. Then, since $J'(v_k)[\psi (v_k-v)]=o(1)$, $\eta_\varrho*(v_k-v)\to v_k-v$ in $L^p(\mathbb{R}^N)$
as $\varrho\to 0$, and $(h_k)$ is bounded in $L^{p'}(\mathbb{R}^N)$, we may fix $\varrho>0$ such that, for $k$ large enough,
\begin{align*}
\left|\int_{\mathbb{R}^N} f_k\Delta(\psi(v_k-v))\right|&
=\left|\int_{\mathbb{R}^N} h_k\psi(v_k-v)\right|+o(1)\\
&\leq \left|\int_{\mathbb{R}^N} h_k\psi(\eta_\varrho*(v_k-v))\right|+\gamma +o(1)\\
&\leq C\left(\int_K|\eta_\varrho*(v_k-v)|^p\right)^{1/p}+\gamma+o(1),
\end{align*}
where $K:=\mathrm{supp}(\psi)$. Now, for any $r\in[1,p]$ and $x\in K$, Hölder's inequality yields
$$|\eta_\varrho*(v_k-v)(x)|\leq |\eta_\varrho|_{r'}\left(\int_{B_\varrho(K)}|v_k-v|^r\right)^{1/r}.$$
As $v_k\rightharpoonup v$ weakly in $D^{2,q'}(\R^N)$, the Rellich-Kondrashov theorem asserts that $v_k\to v$ in $L_{\mathrm{loc}}^r(\R^N)$ for every $r\in [1,p)$. Hence, $\eta_\varrho*(v_k-v)\to 0$ pointwise in $K$. Moreover, taking $r=p$, we get that $|(\eta_\varrho*(v_k-v))(x)|\leq C$ for every $x\in K$. So, the dominated convergence theorem yields
$$\int_K|\eta_\varrho*(v_k-v)|^p=o(1)$$
and, consequently,
\begin{equation} \label{eq:2}
\left|\int_{\mathbb{R}^N} f_k\psi(v_k-v)\right|\leq o(1)+\gamma\qquad\text{for every}\;\gamma>0.
\end{equation}
Furthermore, since $(f_k)$ is bounded in $L^q(\mathbb{R}^N)$, we have that
\begin{equation} \label{eq:3}
\left|\int_{\mathbb{R}^N} f_k(\Delta\psi) (v_k-v)\right|\leq C|f_k|_q\left(\int_K|v_k-v|^{q'}\right)^{1/q'}=o(1),
\end{equation}
and
\begin{equation} \label{eq:4}
\left|\int_{\mathbb{R}^N} f_k\nabla \psi\cdot \nabla (v_k-v)\right|\leq C|f_k|_q\left(\int_K|\nabla(v_k-v)|^{q'}\right)^{1/q'}=o(1),
\end{equation}
because, by the Rellich-Kondrashov theorem, $|\nabla(v_k-v)|\to 0$ in $L_{\mathrm{loc}}^r(\R^N)$ for every $r\in [1,(q')^*)$ with $(q')^*=\frac{Nq'}{N-q'}>q'$.
From \eqref{eq:1}, \eqref{eq:2}, \eqref{eq:3}, and \eqref{eq:4}, we derive
$$ \lim_{k\to\infty}\left|\int_{\mathbb{R}^N}f_k\psi \Delta(v_k-v)\right|=0.$$
Thus, by Lemma \ref{lem:pointwise_convergence}, $\Delta v_k\to \Delta v$ a.e. in $\Theta$, as claimed.

Let $\varphi\in C^\infty_c(\Theta)^\phi$, $X:=\mathrm{supp}(\varphi)$,
$f:=|\Delta v|^{q'-2}\Delta v$, and $h:=|v|^{p-2}v$.  As $(f_k-f)\Delta\varphi\to 0$ a.e. in $\Theta$, Egorov's theorem asserts that, for any $\gamma>0$, there is a subset $Z_\gamma$ of $X$ with $|Z_\gamma|<\gamma$ such that $(f_k-f)\Delta\varphi\to 0$ uniformly in $X\smallsetminus Z_\gamma$. Therefore,
\begin{align*}
\left|\int_{\Theta} (f_k-f)\Delta\varphi\right|\leq \left|\int_{Z_\gamma} (f_k-f)\Delta\varphi\right|+\left|\int_{X\smallsetminus Z_\gamma} (f_k-f)\Delta\varphi\right| \leq C\gamma + o(1).
\end{align*}
A similar argument shows that
\begin{align*}
\left|\int_{\Theta} (h_k-h)\,\varphi\right|\leq C\gamma + o(1).
\end{align*}
Since $\gamma$ is arbitrary, we conclude that
\begin{align*}
\lim_{k\to\infty}|J'(v_k)\varphi-J'(v)\varphi|=\lim_{k\to\infty}\left|\int_{\Theta}(f_k-f)\Delta\varphi-(h_k-h)\,\varphi\right| =0.
\end{align*}
Therefore, $J'(v)\varphi=0$ for all $\varphi\in C^\infty_c(\Theta)^\phi$, as claimed.
\end{proof}

 \vspace{15pt}

\begin{flushleft}
\textbf{Mónica Clapp}\\
Instituto de Matemáticas\\
Universidad Nacional Autónoma de México\\
Circuito Exterior, Ciudad Universitaria\\
04510 Coyoacán, CDMX\\
Mexico\\
\texttt{monica.clapp@im.unam.mx} \vspace{10pt}

\textbf{Alberto Saldaña}\\
Instituto de Matemáticas\\
Universidad Nacional Autónoma de México\\
Circuito Exterior, Ciudad Universitaria\\
04510 Coyoacán, CDMX\\
Mexico\\
\texttt{alberto.saldana@im.unam.mx}
\end{flushleft}


\begin{thebibliography}{90}                                                                                               

\bibitem{b} Bartsch, Th.: Topological methods for variational problems with symmetries. Springer 2006.

\bibitem{bsw}Bartsch, Th.; Schneider, M.; Weth, T.: Multiple solutions of a critical polyharmonic equation. J. Reine Angew. Math. 571 (2004), 131–143. 

\bibitem{be}Beardon, A.F.: The geometry of discrete groups. Graduate Texts in Mathematics, 91. Springer-Verlag, New York, 1983.

\bibitem{bdt} Bonheure, D.; Moreira dos Santos, E.; Tavares, H.: Hamiltonian elliptic systems: a guide to variational frameworks. Port. Math. 71 (2014), no. 3-4, 301--395.

\bibitem{bcm}Bracho, J.; Clapp, M.; Marzantowicz, W.: Symmetry breaking solutions of nonlinear elliptic systems. Topol. Methods Nonlinear Anal. 26 (2005), no. 1, 189--201.

\bibitem{CLO05}Chen, W.; Li, C.; Ou, B.: Classification of solutions for a system of integral equations. Communications in Partial Differential Equations 30 (2005), 59--65.

\bibitem{c}Clapp, M.: Entire nodal solutions to the pure critical exponent problem arising from concentration. J. Differential Equations 261 (2016), no. 6, 3042–3060.

\bibitem{cf}Clapp, M.; Faya, J.: Multiple solutions to anisotropic critical and supercritical problems in symmetric domains. Contributions to nonlinear elliptic equations and systems, 99–120, Progr. Nonlinear Differential Equations Appl., 86, Birkhäuser/Springer, Cham, 2015. 

\bibitem{cl}Clapp, M.; Lopez Rios, L.: Entire nodal solutions to the pure critical exponent problem for the p-Laplacian. J. Differential Equations 265 (2018), no. 3, 891–905. 

\bibitem{df} de Figueiredo, D.G.: Semilinear elliptic systems: existence, multiplicity, symmetry of solutions. In Handbook of differential equations: stationary partial differential equations. Vol. V, Handb. Differ. Equ., pages 1–48. Elsevier/North-Holland, Amsterdam, 2008.

\bibitem{dmpp} del Pino, M.; Musso, M.; Pacard, F.; Pistoia, A.: Large energy entire solutions for the Yamabe equation. J. Differential Equations 251 (2011), no. 9, 2568–2597. 

\bibitem{d}Ding, W.Y.: On a conformally invariant elliptic equation on $\mathbb{R}^n$. Comm. Math. Phys. 107 (1986), no. 2, 331–335. 

\bibitem{hv}Hulshof, J.; van der Vorst, R. C. A. M.: Asymptotic behaviour of ground states. Proc. Amer. Math. Soc. 124 (1996), no. 8, 2423–2431. 

\bibitem{li2}Lindqvist, P.: On the equation ${\rm div}\,(|\nabla u|^{p-2}\nabla u)+\lambda|u|^{p-2}u=0$. Proc. Amer. Math. Soc. 109 (1990), no. 1, 157-164.

\bibitem{l}Lions, P.-L.: The concentration-compactness principle in the calculus of variations. The limit case. I. Rev. Mat. Iberoamericana 1 (1985), no. 1, 145–201.

\bibitem{mw}Mercuri, C.; Willem, M.: A global compactness result for the $p$-Laplacian involving critical nonlinearities. Discrete Contin. Dyn. Syst. 28 (2010), no. 2, 469–493. 

\bibitem{r} Ruf, B.: Superlinear elliptic equations and systems. In Handbook of differential equations: stationary partial differential equations. Vol. V, Handb. Differ. Equ., 211–276. Elsevier/North-Holland, Amsterdam, 2008.

\bibitem{w}  Willem, M.: Minimax theorems. Progress in Nonlinear Differential Equations and their Applications, 24. Birkhäuser Boston, Inc., Boston, MA, 1996.

\end{thebibliography}
\end{document}